\def\@tocline#1#2#3#4#5#6#7{\relax
  \ifnum #1>\c@tocdepth \else
    \par \addpenalty\@secpenalty\addvspace{#2}%
    \begingroup \hyphenpenalty\@M
    \@tempdima\if@empty{#4}{\csname r@tocindent\number#1\endcsname}{#4}\relax
    \parindent\z@ \leftskip#3\relax \advance\leftskip\@tempdima\relax
    \rightskip\@pnumwidth plus4em \parfillskip-\@pnumwidth
    #5\leavevmode\hskip-\@tempdima
    \ifcase #1 \or\or \hskip 1em \or \hskip 2em \else \hskip 3em \fi%
    #6\nobreak\hfill\hbox to\@pnumwidth{\@tocpagenum{#7}}\par
    \nobreak
    \endgroup
  \fi}
\title{On the boundedness of $k$-algebra homomorphisms between $k$-affinoid algebras}
\author{Shou Yoshikawa}
\address{Institute of Science Tokyo, Tokyo 152-8551, Japan}
\email{yoshikawa.s.9fe9@m.isct.ac.jp}
\newcommand{\Z}{\mathbb{Z}} \newcommand{\Q}{\mathbb{Q}} \newcommand{\R}{\mathbb{R}}
\newcommand{\cM}{\mathcal{M}}
\DeclareMathOperator{\Spec}{Spec}
\DeclareMathOperator{\Hom}{Hom}
\newcommand{\wt}{\widetilde}
\newcommand{\cond}[1]{\textup{(#1)}}
\newcommand{\eqtag}[1]{\overset{(\star_{#1})}{=}}
\theoremstyle{plain}
\newtheorem{theorem}{Theorem}[section]
\newtheorem{proposition}[theorem]{Proposition}
\newtheorem{lemma}[theorem]{Lemma}
\newtheorem{claim}[theorem]{Claim}
\newtheorem*{claim*}{Claim}
\newtheorem{theoremA}{Theorem}
\theoremstyle{definition}
\newtheorem{definition}[theorem]{Definition}
\newtheorem{example}[theorem]{Example}
\newtheorem{notation}[theorem]{Notation}
\newtheorem*{setup*}{Setup}
\theoremstyle{remark}
\newtheorem*{ackn}{Acknowledgements}
\numberwithin{equation}{section}
\crefname{theorem}{Theorem}{Theorems}
\crefname{proposition}{Proposition}{Propositions}
\crefname{lemma}{Lemma}{Lemmas}
\crefname{corollary}{Corollary}{Corollaries}
\crefname{conjecture}{Conjecture}{Conjectures}
\crefname{claim}{Claim}{Claims}
\crefname{notation}{Notation}{Notations}
\crefname{theoremA}{Theorem}{Theorems}
\crefname{corollaryA}{Corollary}{Corollaries}
\crefname{example}{Example}{Examples}
\begin{document}

\begin{abstract}
Let $k$ be a complete non-archimedean non-trivial valued field. In this paper, we investigate whether every $k$-algebra homomorphism between $k$-affinoid algebras is automatically bounded. We show that this property holds if and only if either  $|k^\times|^\Q = \R_{>0}$ holds, or $k$ has positive characteristic and is $F$-finite. 
\end{abstract}

\maketitle

\setcounter{tocdepth}{1}

\section{Introduction}

% The theory of $k$-affinoid algebras plays a central role in non-archimedean analytic geometry, especially in the framework introduced by Tate and further developed by Berkovich and others. A fundamental feature of these algebras is the existence of a natural norm that gives them a Banach algebra structure. In this context, one often considers homomorphisms of $k$-algebras that are compatible with these norms. However, not every $k$-algebra homomorphism between affinoid algebras is necessarily bounded with respect to the Banach norms.

In this paper, we prove the following theorem concerning the boundedness of $k$-algebra homomorphisms between $k$-affinoid algebras.

\begin{theoremA}\label{main-thm}\textup{(cf.~\cref{reduced-case})}
Let $k$ be a complete non-archimedean valued field of characteristic $p \geq 0$ with $|k^\times| \neq \{1\}$.
Then every $k$-algebra homomorphism between reduced $k$-affinoid algebras is bounded.
Moreover, the following conditions are equivalent:
\begin{enumerate}
    \item Every $k$-algebra homomorphism between $k$-affinoid algebras is bounded.
    \item Either $|k^\times|^{\mathbb{Q}} = \mathbb{R}_{>0}$, or $p > 0$ and $[k : k^p] < \infty$,
\end{enumerate}
where $k^p:=\{a^p \mid a\in k\}$.
\end{theoremA}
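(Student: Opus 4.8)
The plan is to isolate the reduced case, reduce the general dichotomy to a statement about $k$-derivations by dévissage along nilradicals, and then treat the two halves of condition~(2) separately. First, the reduced statement. For any $k$-algebra homomorphism $\varphi\colon A\to B$ and any maximal ideal $\mathfrak n\subset B$, the contraction $\varphi^{-1}(\mathfrak n)$ is maximal, since $A/\varphi^{-1}(\mathfrak n)$ is an integral domain embedding into the finite $k$-algebra $B/\mathfrak n$; hence $\varphi$ induces $\varphi^{*}\colon\operatorname{Max}B\to\operatorname{Max}A$, and because the absolute value extends uniquely along the finite extensions $A/\varphi^{-1}(\mathfrak n)\hookrightarrow B/\mathfrak n$ one gets $|\varphi(f)(\mathfrak n)|=|f(\varphi^{*}\mathfrak n)|$, so $\|\varphi(f)\|_{\sup}\le\|f\|_{\sup}$. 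When $A$ and $B$ are reduced the supremum seminorm is a norm equivalent to any residue norm (Grauert–Remmert), so $\varphi$ is bounded; applied to $A\to B_{\mathrm{red}}$ this already shows that the reduced part of an arbitrary homomorphism is bounded, which will serve as the base case below.

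For the implication (2)$\Rightarrow$(1), given $\varphi\colon A\to B$ I would induct on the least $m$ with $N_B^{\,m}=0$, where $N_B$ denotes the nilradical. Put $J=N_B^{\,m-1}$, so $J^{2}=0$ and $J$ is a finite $B_{\mathrm{red}}$-module; the composite $A\to B/J$ is bounded by induction. Since affinoid algebras admit Schauder bases that are orthogonal up to a fixed constant, the sequence $0\to J\to B\to B/J\to 0$ splits topologically over $k$, and subtracting a bounded linear splitting from $\varphi$ reduces boundedness of $\varphi$ to boundedness of the associated $k$-linear map into $J$. Carrying out the parallel reduction by $N_A$ (the maps occurring on the graded pieces $N_A^{\,i}/N_A^{\,i+1}$ are $A_{\mathrm{red}}$-linear maps of finite modules, hence automatically bounded), one is left with a $k$-derivation $D\colon C\to M$ to bound, where $C$ is a reduced affinoid algebra, $M$ is a finite module over the affinoid algebra $E:=B_{\mathrm{red}}$, and $M$ is viewed as a $C$-module through the bounded structure map $\bar\varphi\colon C\to E$ coming from $\varphi$. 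So (2)$\Rightarrow$(1) comes down to showing every such $D$ is bounded.

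This is where the two clauses of (2) enter, differently. If $p>0$ and $[k:k^{p}]<\infty$ it is immediate: $D$ annihilates $p$-th powers, and $F$-finiteness makes $C$ module-finite over $C^{p}$, say $C=\sum_{i=1}^{N}C^{p}e_{i}$; writing $c=\sum_{i}c_{i}^{\,p}e_{i}$ with $\max_i\|c_{i}^{\,p}\|\le(\mathrm{const})\|c\|$ one finds $D(c)=\sum_{i}\bar\varphi(c_{i})^{p}D(e_{i})$, so $\|D(c)\|\le\|\bar\varphi\|^{p}\bigl(\max_i\|D(e_{i})\|\bigr)\bigl(\max_i\|c_{i}^{\,p}\|\bigr)\le(\mathrm{const})\|c\|$ since there are finitely many $e_{i}$. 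If instead $|k^{\times}|^{\mathbb Q}=\mathbb R_{>0}$ (in any characteristic) one must show that $D$ factors through the finite $C$-module $\widehat\Omega_{C/k}$ of continuous Kähler differentials — for which $d\colon C\to\widehat\Omega_{C/k}$ is bounded and any $C$-linear map $\widehat\Omega_{C/k}\to M$ is automatically bounded — equivalently that the kernel $K$ of $\Omega^{\mathrm{alg}}_{C/k}\to\widehat\Omega_{C/k}$ maps to $0$ in $M$. The point is that $|k^{\times}|^{\mathbb Q}=\mathbb R_{>0}$ forces the continuous differentials of $k$ over its prime field, and of affinoid algebras, to be ``full'', ruling out the divisible finite target modules that could otherwise support a nonzero map out of $K$; such modules only arise when $\bar\varphi$ is far from surjective on maximal spectra. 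I expect this to be the hardest part of the positive direction.

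Finally, for $\neg$(2)$\Rightarrow\neg$(1), assume $|k^{\times}|^{\mathbb Q}\ne\mathbb R_{>0}$ and $k$ is not $F$-finite of positive characteristic. The aim is to produce a reduced affinoid $C$, an affinoid $E$ with a bounded $k$-algebra map $\bar\varphi\colon C\to E$ whose image in $\operatorname{Max}C$ is small, a finite $E$-module $M$, and an \emph{unbounded} $k$-derivation $D\colon C\to M$; then $c\mapsto(\bar\varphi(c),D(c))$ is an unbounded $k$-algebra homomorphism $C\to E\ltimes M$ between affinoid algebras (the trivial square-zero extension $E\ltimes M$ being affinoid because $M$ is finite over $E$). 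When $\operatorname{char}k=p>0$ and $[k:k^{p}]=\infty$, the data $E,M,D$ are built from an infinite $p$-independent family $t_{1},t_{2},\dots\in k$, which may be taken with $|t_{i}|\to 0$; the $p$-independence is precisely what makes the relevant $\Omega^{\mathrm{alg}}$-type module too large to be seen by $\widehat\Omega$. When $\operatorname{char}k=0$ one instead uses a real number $r\notin|k^{\times}|^{\mathbb Q}$, a radius attained by no finite extension of $k$, to build $E$ (a disk or annulus ``of radius $r$'', realised inside a strictly affinoid algebra) and to break continuity. The decisive and most delicate step in either case is to verify that $D$ is genuinely unbounded, i.e.\ to exhibit $c_{n}\to0$ in $C$ with $D(c_{n})\not\to0$ in $M$; this is the step I expect to be the main obstacle of the whole argument.
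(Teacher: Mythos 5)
Your overall architecture for the equivalence (dévissage along the nilradical to reduce boundedness to the vanishing/boundedness of certain derivations, and square–zero extensions $E\ltimes M$ to convert an unbounded derivation into an unbounded algebra homomorphism) matches the paper's Lemma on the deformation problem. But there are two genuine gaps, and the first one sits exactly where the paper does all of its work.

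First, your proof of the reduced case is the classical argument for \emph{strictly} $k$-affinoid algebras and does not extend to general $k$-affinoid algebras in Berkovich's sense, which is the actual content of the theorem. If $B$ is not strict, the residue field $B/\mathfrak n$ at a maximal ideal is a finite extension of some $K_s=k\{s^{-1}T,sT^{-1}\}$ with $s\notin|k^\times|^{\mathbb Q}$, not a finite extension of $k$; consequently $A/\varphi^{-1}(\mathfrak n)$ is merely a domain inside an infinite-dimensional $k$-algebra (so $\varphi^{-1}(\mathfrak n)$ need not be maximal), and the absolute value of $k$ does \emph{not} extend uniquely to $B/\mathfrak n$, so the quantity $|f(\varphi^{*}\mathfrak n)|$ is not defined until one already knows the composite $A\to B/\mathfrak n$ is bounded --- which is what is being proved. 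This is precisely why the paper's Example (a $k$-algebra homomorphism $k\{r^{-1}T\}\to K_r\{T\}/(T^2)$ built from a $k[T]$-derivation into $K_r$) contradicts Temkin's Fact~3.1.4.2. The paper's proof of the reduced case instead runs through the closed graph theorem, a Hensel-type lemma producing compatible systems of $p$-power roots of $g_n+a$ for $|a|>\|g_n\|$ (an algebraic invariant preserved by arbitrary ring homomorphisms, which is the whole point), and the Shilov boundary, to force the limit $\lim\varphi(g_n)$ into $\bar k$ and then to $0$. None of this is captured by your first paragraph. Second, in the direction $\neg(2)\Rightarrow\neg(1)$ you explicitly defer ``the decisive and most delicate step,'' namely exhibiting the unbounded derivation. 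The paper's device removes the need to exhibit $c_n\to0$ with $D(c_n)\not\to0$: one constructs a \emph{nonzero} derivation $\delta\colon k\{r^{-1}T\}\to K_r$ that vanishes on $k[T]$; then $g\mapsto(\psi(g),\delta(g))$ agrees with a bounded homomorphism on the dense subring $k[T]$ yet differs from it, so it cannot be continuous. The real work is then purely algebraic: producing $f\in k\{r^{-1}T\}$ with $df\neq0$ in $\Omega_{K/k(T)}$, i.e.\ $f$ not integral over $k(T)$ in characteristic $0$ (a lacunary series argument) and $f\notin K^p(k(T))$ when $[k:k^p]=\infty$ (using an infinite $p$-basis). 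Your sketch gestures at the right ingredients but does not supply either construction or the vanishing-on-$k[T]$ trick that makes unboundedness automatic. As a minor remark, in the case $|k^\times|^{\mathbb Q}=\mathbb R_{>0}$ of $(2)\Rightarrow(1)$, every $k$-affinoid algebra is strictly $k$-affinoid and the classical continuity theorem applies directly; no analysis of $\widehat\Omega_{C/k}$ versus $\Omega^{\mathrm{alg}}_{C/k}$ is required there.
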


\noindent
In \cite{Temkin10}*{Fact~3.1.4.2}, condition \textup{(1)} in \cref{main-thm} is stated as always holding, but \cref{main-thm} shows that this is not the case in general.
The following example provides a concrete counterexample.

\begin{example}
Let $k := \mathbb{Q}_p$ for a prime number $p$, and take $r \in \mathbb{R}_{>0} \setminus |k^\times|^{\mathbb{Q}}$.
By \cref{computation-derivation}, there exists a non-zero derivation
\[
\delta \in \mathrm{Der}_{k[T]}(k\{r^{-1}T\}, K_r).
\]
This derivation induces a $k$-algebra homomorphism
\[
\varphi \colon k\{r^{-1}T\} \hookrightarrow K_r \to K_r\{T\}/(T^2), \quad g \mapsto g + \delta(g) T,
\]
which is not bounded.

Indeed, let $f \in k\{r^{-1}T\}$ with $\delta(f) \neq 0$, and take a sequence $\{f_n \in k[T]\}$ converging to $f$ in $k\{r^{-1}T\}$.
Then
\[
\lim \varphi(f_n) = \lim (f_n + \delta(f_n) T) = f \neq f + \delta(f) T = \varphi(f),
\]
so $\varphi$ is not continuous, hence not bounded.
\end{example}

The paper is organized as follows.
In Subsection~2.2, we prove the first assertion of \cref{main-thm} (see \cref{reduced-case}). To relate the norm with algebraic properties, we introduce the notion of a compatible system of $p$-power roots. This proof is inspired by the hint to Exercise~(ii) in \cite{Temkin10}.
In Subsection~2.3, we apply the result of Subsection~2.2 to reinterpret \cref{main-thm} as a deformation problem. This viewpoint allows us to connect the boundedness of $k$-algebra homomorphisms to the existence of non-zero derivations over polynomial rings, thereby linking it to algebraic properties of the base field~$k$.

\begin{ackn}
This result was established as part of a seminar on Berkovich spaces. The author would like to express his sincere gratitude to the main members of the seminar—Yusuke Matsuzawa, Masaru Nagaoka, and Teppei Takamatsu—for their valuable discussions and contributions during the seminar. 
The author was supported by JSPS KAKENHI Grant number JP24K16889.
\end{ackn}

\section{Proof of \cref{main-thm}}

\subsection{Notation}\label{ss-notation}
We follow the notation and terminology used in \cite{Temkin10} and \cite{Temkin04}.
This subsection provides additional remarks and clarifications regarding the symbols and terms employed throughout the paper.

\begin{enumerate}
\item Let $(A, \|\cdot\|_A)$ and $(B, \|\cdot\|_B)$ be seminormed rings.
A ring homomorphism $\varphi \colon A \to B$ is said to be \emph{bounded} if there exists a constant $C > 0$ such that $\|\varphi(a)\|_B \leq C \|a\|_A$ for all $a \in A$ \cite[Definition~2.1.1(i)]{Temkin10}.
If $\|\cdot\|_B$ is power-multiplicative, then the boundedness of $\varphi$ implies $\|\varphi(a)\|_B \leq \|a\|_A$ for all $a \in A$.

\item Let $(k, |\cdot|)$ be a complete non-archimedean valued field.
We use the terms \emph{strictly $k$-affinoid algebra} and \emph{$k$-affinoid algebra} in the sense of \cite[Definition~3.1.1.1(i)]{Temkin10}.
If $A$ is a $k$-affinoid algebra, then the structure morphism $(k, |\cdot|) \to (A, \rho_A)$ is bounded.
In particular, the spectral radius $\rho_A$ is non-archimedean.

\item Let $(k, |\cdot|)$ be a complete non-archimedean valued field, and let $A$ be a $k$-affinoid algebra.
We use the notions of the \emph{reduction} $\widetilde{A}$ of $A$ and the \emph{reduction map} $\pi_A \colon \cM(A) \to \Spec_{\R_{>0}} \widetilde{A}$ as defined in \cite[Section~3]{Temkin04}.
\end{enumerate}

\subsection{Preliminary}
In this subsection, we collect some basic properties about affinoid algebras will be used in the paper. 

\begin{proposition}\label{int-min-prime}
Let \( G \) be a group and \( A \) a \( G \)-graded ring. 
If \( A \) is graded reduced, that is, every homogeneous nilpotent element is zero, then the intersection of all graded minimal prime ideals of \( A \) is zero.
\end{proposition}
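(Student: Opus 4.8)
\textit{Proof proposal.} The plan is to transplant the classical argument that the nilradical of a reduced ring is $0$ because it coincides with the intersection of all (minimal) prime ideals, carrying it out entirely with homogeneous elements and homogeneous ideals. Recall that a \emph{graded prime ideal} is a homogeneous ideal $\mathfrak{p}$ such that $ab \in \mathfrak{p}$ with $a,b$ homogeneous forces $a \in \mathfrak{p}$ or $b \in \mathfrak{p}$, and a \emph{graded minimal prime} is a minimal element of the set of graded prime ideals. I would prove in turn: (i) a homogeneous element of $A$ is nilpotent if and only if it lies in every graded prime ideal of $A$; (ii) every graded prime ideal contains a graded minimal prime; and then (iii) assemble (i) and (ii) with the graded-reducedness hypothesis. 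We may assume $A \neq 0$, since otherwise there are no graded primes and the intersection over the empty family is the whole ring $(0)$.

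For (i) the content is that a homogeneous element which is not nilpotent is missed by some graded prime (the converse being immediate from graded primeness). Given a homogeneous $f$ that is not nilpotent, set $S := \{1, f, f^2, \dots\}$, a set of homogeneous elements not containing $0$. By Zorn's lemma pick a homogeneous ideal $\mathfrak{p}$ maximal among homogeneous ideals disjoint from $S$ (the zero ideal is one). I claim $\mathfrak{p}$ is graded prime. Suppose $a,b$ are homogeneous with $ab \in \mathfrak{p}$ but $a,b \notin \mathfrak{p}$. Then the homogeneous ideals $\mathfrak{p} + Aa$ and $\mathfrak{p} + Ab$ strictly contain $\mathfrak{p}$, hence meet $S$; extracting the relevant homogeneous components we may write $f^m = u + ca$ and $f^n = v + db$ with $u,v \in \mathfrak{p}$ and $c,d$ homogeneous, using the fact that a homogeneous component of an element of $Aa$ is again of the form $(\text{homogeneous}) \cdot a$ --- a purely degree-theoretic observation valid for an arbitrary grading group $G$, so abelianness of $G$ is never used. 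Multiplying, $f^{m+n} = uv + udb + cav + cd\,(ab) \in \mathfrak{p} + A(ab) \subseteq \mathfrak{p}$, contradicting $\mathfrak{p} \cap S = \varnothing$. Since $f \in S$, $f \notin \mathfrak{p}$, proving (i).

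For (ii), apply Zorn's lemma to the set of graded primes contained in a fixed graded prime, ordered by reverse inclusion; the only point to check is that the intersection of a chain of graded primes is again a graded prime, and homogeneity together with the standard ``a chain-intersection of primes is prime'' argument (applied only to homogeneous elements) does this. Finally, for (iii): let $x$ lie in every graded minimal prime. Since these ideals are homogeneous, each homogeneous component $x_g$ of $x$ also lies in every graded minimal prime, hence, by (ii), in every graded prime ideal; by (i), $x_g$ is nilpotent, so $x_g = 0$ by graded-reducedness, and therefore $x = 0$. The one genuinely delicate point is step (i): one must run the Zorn's lemma / ``homogeneous localization'' bookkeeping carefully so that every ideal involved stays homogeneous and every extraction of a homogeneous component is legitimate over a general, possibly non-abelian, group $G$; granting that, steps (ii) and (iii) are routine.
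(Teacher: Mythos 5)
Your proof is correct. It reaches the same destination as the paper --- every homogeneous element of the intersection is nilpotent, hence zero by graded-reducedness --- but by a different mechanism for the key step. The paper passes to the homogeneous localization $A_f$ (in the sense of \cite{Temkin04}*{Section~1}): if the homogeneous element $f$ lies in every graded minimal prime, then $\Spec_G(A_f)=\emptyset$, so $A_f=0$, so $f$ is nilpotent. You instead prove directly, via Zorn's lemma applied to homogeneous ideals disjoint from $\{1,f,f^2,\dots\}$, that a non-nilpotent homogeneous element avoids some graded prime, and you supply the auxiliary fact (also used implicitly by the paper when it asserts $\Spec_G(A_f)=\emptyset$) that every graded prime contains a graded minimal one. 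Your version is more self-contained --- it does not invoke the graded localization machinery and its spectrum --- at the cost of being longer; your attention to extracting homogeneous components of elements of $\mathfrak{p}+Aa$ over a possibly non-abelian $G$ is exactly the point that needs care, and you handle it correctly (left multiplication by $\deg a$ is injective on $G$, so each graded component of $Aa$ is a homogeneous multiple of $a$). Both arguments are sound; the paper's is essentially your argument with the Zorn step outsourced to the cited properties of $A_f$.
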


\begin{proof}
Let \( I \) be the intersection of all graded minimal prime ideals of \( A \). 
Then \( I \) is a graded ideal. 
Let \( f \in I \) be a homogeneous element. 
Then the localization \( G \)-graded ring \( A_f \), in the sense of \cite{Temkin04}*{Section~1}, satisfies \( \Spec_G(A_f) = \emptyset \), and in particular, \( A_f = \{0\} \).
This implies that there exists an integer \( n \geq 1 \) such that \( f^n = 0 \). 
Since \( A \) is graded reduced, it follows that \( f = 0 \), as desired.
\end{proof}

\begin{proposition}\label{Shilov-boundary}\textup{(\cite{Temkin04}*{Proposition~3.3})}
Let $k$ be a complete non-archimedean valued field with $|k^\times| \neq \{1\}$, and let $A$ be a $k$-affinoid algebra.
Let $\Gamma(A)$ denote the Shilov boundary of $A$ \textup{(see \cite{Temkin10}*{Fact~3.4.1.6~(ii)})}.
Then the following statements hold:
\begin{enumerate}
    \item Let $f \in A$. If $|f(x)| = 0$ for all $x \in \Gamma(A)$, then $\rho_A(f) = 0$.
    \item For every point $x$ in $\Gamma(A)$, there exists a bounded $k$-algebra homomorphism
    \[
    \psi_x \colon A \to A_x
    \]
    to a $k$-affinoid algebra $A_x$ such that:
    \begin{itemize}
        \item $A_x$ is reduced;
        \item the spectral seminorm $\rho_{A_x}$ is multiplicative;
        \item and $\psi_x^*(\rho_{A_x}) = x$ as seminorms on $A$.
    \end{itemize}
\end{enumerate}
\end{proposition}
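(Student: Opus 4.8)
The plan is to run everything through Temkin's graded-reduction machinery. Recall that $A$ carries the $\R_{>0}$-graded reduction ring $\widetilde A=\bigoplus_{r>0}\widetilde A_r$ with $\widetilde A_r=\{a:\rho_A(a)\le r\}/\{a:\rho_A(a)<r\}$, that there is a reduction map $\pi_A\colon\cM(A)\to\Spec_{\R_{>0}}\widetilde A$, and that $\Gamma(A)$ is the set of preimages under $\pi_A$ of the graded generic points of $\Spec_{\R_{>0}}\widetilde A$, i.e.\ of the graded minimal primes of $\widetilde A$, each such preimage being a single point of $\cM(A)$.

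For \textup{(1)}: first note that $\widetilde A$ is graded reduced. Indeed $\rho_A$ is power-multiplicative, so if $\tilde a\in\widetilde A_r$ is represented by $a\in A$ with $\rho_A(a)\le r$ and $\tilde a^{\,n}=0$, then $\rho_A(a)^n=\rho_A(a^n)<r^n$, hence $\rho_A(a)<r$ and $\tilde a=0$. Thus \cref{int-min-prime} applies and the intersection of the graded minimal primes of $\widetilde A$ is zero. Now let $f\in A$ with $\rho_A(f)=r>0$; then $\tilde f\in\widetilde A_r$ is a nonzero homogeneous element, so $\tilde f\notin\mathfrak p$ for some graded minimal prime $\mathfrak p$. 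If $x\in\Gamma(A)$ is the point with $\pi_A(x)=\mathfrak p$, then unwinding the definition of $\pi_A$ (the image of $\tilde f$ in the graded residue field $\widetilde{\cH(x)}$ is the class of $f(x)$, which is nonzero precisely because $\tilde f\notin\mathfrak p$) gives $|f(x)|=r\ne 0$. Contraposition yields \textup{(1)}.

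For \textup{(2)}: fix $x\in\Gamma(A)$ and put $\mathfrak p=\pi_A(x)$, a graded minimal prime. Since $\widetilde A$ is graded reduced and $\mathfrak p$ is minimal, the graded localisation $\widetilde A_{(\mathfrak p)}$ is a graded field, canonically the graded residue field $\widetilde{\cH(x)}$. I would then realise this ``generic point of a single reduction-component'' by an affinoid algebra: take an affinoid subdomain $V\ni x$ — a Laurent domain suffices — chosen to isolate the branch of $\Spec_{\R_{>0}}\widetilde A$ through $\mathfrak p$, and set $A_x=\cO(V)_{\mathrm{red}}$ with $\psi_x$ the restriction followed by $A\to A_{\mathrm{red}}$; alternatively, fixing a presentation $A=k\{r_1^{-1}T_1,\dots,r_n^{-1}T_n\}/I$, lift a finite homogeneous generating set of $\mathfrak p$ to the Tate algebra, adjoin it to $I$, and let $A_x$ be the reduction of the resulting quotient. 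In either case $\psi_x$ is bounded (a composite of quotient and localisation maps) and $A_x$ is reduced, and the remaining task is to verify that $\widetilde{A_x}$ is a graded integral domain — so that $\rho_{A_x}$ is multiplicative — and that $\psi_x^*(\rho_{A_x})=x$.

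That last verification is where I expect the real work to be. The hypothesis that $\mathfrak p$ is a \emph{minimal} graded prime (equivalently $x\in\Gamma(A)$, not an arbitrary point) is exactly what must be leveraged: one has to show that passing to $V$ (or adjoining lifts of generators of $\mathfrak p$) neither creates new reduction-components nor destroys the integrality of the chosen one, so that $\widetilde{A_x}$ agrees with $\widetilde A_{(\mathfrak p)}=\widetilde{\cH(x)}$ up to taking a graded domain, and that the sup-seminorm pulled back along $\psi_x$ is $x$ on the nose rather than something strictly smaller. This rests on the finiteness and graded-Noetherian properties of reductions of affinoid algebras from \cite{Temkin04}; the standing assumption $|k^\times|\ne\{1\}$ enters to guarantee that $\Gamma(A)$ is finite and nonempty and that the reduction dictionary is available.
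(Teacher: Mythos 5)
Your part (1) is correct and coincides with the paper's argument: $\widetilde{A}$ is graded reduced, so \cref{int-min-prime} applies, and a nonzero homogeneous $\widetilde{f}$ lies outside some graded minimal prime $\mathfrak{p}=\pi_A(x)$, whence $|f(x)|=\rho_A(f)$.

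The gap is in part (2). You correctly identify the strategy---isolate the component of $\mathfrak{p}$ by a Laurent-type domain---but you explicitly defer the two verifications that constitute the actual content of the statement, namely that $\widetilde{A_x}$ is a graded domain and that $\psi_x^*(\rho_{A_x})=x$. The missing construction is concrete: since $\mathfrak{p}$ is minimal and there are only finitely many graded minimal primes, graded prime avoidance produces a homogeneous $\widetilde{f}$ lying in every graded minimal prime of $\widetilde{A}$ \emph{except} $\mathfrak{p}$; lift it to $f\in A$, set $r:=\rho_A(f)$, and put $A_x:=A\{r^{-1}f\}^{\mathrm{red}}$. The key input is \cite{Temkin04}*{Proposition~3.1(ii)}, which for this exact radius identifies $\widetilde{A\{r^{-1}f\}}$ with the graded localization $\widetilde{A}_{\widetilde{f}}$; the latter is graded reduced with a unique graded minimal prime (the one induced by $\mathfrak{p}$), hence a graded domain by \cref{int-min-prime}, which yields multiplicativity of $\rho_{A_x}$. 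The equality $\psi_x^*(\rho_{A_x})=x$ then follows by computing $\pi_A\bigl(\psi_x^*(\rho_{A_x})\bigr)=\mathfrak{p}$ and invoking the injectivity of $\pi_A$ over graded generic points (\cite{Temkin04}*{Proposition~3.3(ii)}). None of this is automatic, and your alternative construction---adjoining lifts of a homogeneous generating set of $\mathfrak{p}$ to the defining ideal---actually fails: a lift $g$ of an element of $\mathfrak{p}=\pi_A(x)$ only satisfies $|g(x)|<\rho_A(g)$, not $|g(x)|=0$, so $x$ need not factor through the quotient. For instance, for $A=k\{T_1,T_2\}/(T_1T_2-a)$ with $0<|a|<1$ one has $\widetilde{A}\simeq\widetilde{k}[\widetilde{T}_1,\widetilde{T}_2]/(\widetilde{T}_1\widetilde{T}_2)$ with minimal primes $(\widetilde{T}_1)$ and $(\widetilde{T}_2)$, yet $A/(T_1)=0$, so neither Shilov point factors through that quotient.
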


\begin{proof}
We begin with the proof of (1).  
By definition, $\widetilde{A}$ is graded reduced.
Since $|f(x)| = 0$ for all $x \in \Gamma(A)$, it follows that $\widetilde{f} \in \widetilde{A}$ is contained in all graded minimal primes of $\widetilde{A}$.
Thus, $\widetilde{f} = 0$ by \cref{int-min-prime}, and consequently, $\rho_A(f) = 0$.

Next, we prove (2).  
Fix $x \in \Gamma(A)$ and let $\mathfrak{p} := \pi_A(x) \in \Spec_G(\widetilde{A})$.
Choose a homogeneous element $\widetilde{f} \in \mathfrak{p}$ such that $\widetilde{f}$ does not lie in any graded minimal prime of $\widetilde{A}$ other than $\mathfrak{p}$, and let $f \in A$ be a lift of $\widetilde{f}$.
Set $\psi_x \colon A \to A_x := A\{r^{-1}f\}^{\mathrm{red}}$, where $r := \rho_A(f)$.
Then, by \cite{Temkin04}*{Proposition~3.1~(ii)}, we have
\[
\widetilde{A_x} \simeq \widetilde{A\{r^{-1}f\}} \simeq \widetilde{A}_{\widetilde{f}}.
\]
Therefore, $\widetilde{A_x}$ has a unique graded minimal prime corresponding to $\mathfrak{p}$.
In particular, $\widetilde{A_x}$ is a graded domain, and hence $\rho_{A_x}$ is multiplicative.
Furthermore, we have
\[
\pi_A \circ \psi_x^*(\rho_{A_x}) = \widetilde{\psi_x}^{*} \circ \pi_{A_x}(\rho_{A_x}) = \mathfrak{p},
\]
and thus $\psi_x^*(\rho_{A_x}) = x$ by \cite{Temkin04}*{Proposition~3.3~(ii)}, as required.
\end{proof}

\begin{proposition}\label{A-to-AK}
Let $(A, \|\cdot\|)$ be a non-archimedean Banach ring, and let $r := (r_1, \ldots, r_n) \in \R^n_{>0}$.  
Let $\rho$ and $\rho_r$ denote the spectral radii on $A$ and on $A_r := A\{r^{-1}T, rT^{-1}\}$, respectively.  
Then for any $f = \sum a_\nu T^\nu \in A_r$, we have
\[
\rho_r(f) = \max\{\rho(a_\nu) r^\nu \mid \nu \in \Z^n\}.
\]
\end{proposition}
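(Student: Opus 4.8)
The plan is to prove the two inequalities $\rho_r(f) \le \max_{\nu} \rho(a_\nu) r^\nu$ and $\rho_r(f) \ge \rho(a_\nu) r^\nu$ (for each individual $\nu \in \Z^n$) separately; taking the maximum over $\nu$ in the latter then yields equality. I will use without further comment the standard facts that the spectral radius of a non-archimedean Banach ring is a non-archimedean, submultiplicative, power-multiplicative seminorm bounded above by the ambient norm, and that it satisfies the maximum-modulus property $\rho(a) = \max_{x \in \cM(A)} |a(x)|$.

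First I would record the elementary observations that $\rho_r$ restricts to $\rho$ on the subring $A$ (because $\|\cdot\|_r$ restricts to $\|\cdot\|_A$ on constants) and that $\rho_r(T^\nu) = r^\nu$ for all $\nu \in \Z^n$ (directly from the definition of the spectral radius, since the monomial $T^{m\nu}$ has Gauss norm $\|1\|_A\, r^{m\nu}$). For the upper bound, let $f_N := \sum_{|\nu| \le N} a_\nu T^\nu$ be the partial sums, which converge to $f$ in $A_r$. The non-archimedean property and submultiplicativity of $\rho_r$ give $\rho_r(f_N) \le \max_{|\nu|\le N} \rho_r(a_\nu)\,\rho_r(T^\nu) = \max_{|\nu|\le N} \rho(a_\nu) r^\nu \le s$, where $s := \max_{\nu \in \Z^n} \rho(a_\nu) r^\nu$ is an attained finite maximum because $\rho(a_\nu) r^\nu \le \|a_\nu\| r^\nu \to 0$. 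Since $\rho_r \le \|\cdot\|_r$ is continuous, $\rho_r(f) = \lim_N \rho_r(f_N) \le s$.

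For the lower bound, fix $\nu \in \Z^n$ and a point $x \in \cM(A)$, with associated character $\chi_x \colon A \to \cH(x)$ into the completed residue field, a non-archimedean valued field satisfying $|a(x)| \le \|a\|_A$ for all $a \in A$. Applying $\chi_x$ to coefficients defines a ring homomorphism $A_r \to \cH(x)\{r^{-1}T, rT^{-1}\}$, $g = \sum b_\mu T^\mu \mapsto \overline{g} := \sum b_\mu(x) T^\mu$, of operator norm $\le 1$ for the Gauss norms; hence it does not increase spectral radii, so $\rho_r(f) \ge \rho_x(\overline{f})$, where $\rho_x$ is the spectral radius of $\cH(x)\{r^{-1}T, rT^{-1}\}$. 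Now the Gauss norm on $\cH(x)\{r^{-1}T, rT^{-1}\}$ is multiplicative — this is the one genuinely computational input, via the classical ``top monomial'' argument: among the finitely many monomials attaining the Gauss maximum, the largest in the lexicographic order on $\Z^n$ survives without cancellation in a product, since $\cH(x)$ is a field. Being multiplicative it is power-multiplicative, hence equals $\rho_x$, so $\rho_r(f) \ge \rho_x(\overline{f}) = \max_\mu |a_\mu(x)| r^\mu \ge |a_\nu(x)| r^\nu$. Taking the supremum over $x \in \cM(A)$ and applying the maximum-modulus property gives $\rho_r(f) \ge \rho(a_\nu) r^\nu$, and maximizing over $\nu$ completes the argument.

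I expect the lower bound to be the main obstacle: singling out one coefficient is not a ring-theoretic operation, so the proof must descend to a residue field where the Gauss norm is multiplicative, and essentially all the content lies in that multiplicativity statement for the multivariate Laurent Gauss norm over an arbitrary (not necessarily algebraically closed or discretely valued) valued field. An equivalent packaging avoids the auxiliary homomorphisms: for each $x \in \cM(A)$ one checks directly that $g = \sum b_\mu T^\mu \mapsto \max_\mu |b_\mu(x)| r^\mu$ defines a point of $\cM(A_r)$ (multiplicative by the same computation, bounded by $\|\cdot\|_r$), and then quotes $\rho_r(f) = \max_{y \in \cM(A_r)} |f(y)|$; I would present whichever version reads more cleanly.
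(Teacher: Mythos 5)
Your argument is correct, but the lower bound is obtained by a genuinely different route from the paper's. The paper works entirely inside $A_r$ with the definition $\rho_r(g)=\lim_l\|g^l\|^{1/l}$: after reducing to $n=1$, it splits a Laurent polynomial as $f'+f''$ where $f'$ collects the terms attaining $\max_i\rho(a_i)r^i$, and extracts the lower bound from the single top-degree coefficient of $(f')^l$, which equals $a_{i_0}^l$ for $i_0$ the largest index of $f'$; density then handles general $f$. You instead descend along each character $\chi_x\colon A\to\cH(x)$ to the Laurent--Tate algebra $\cH(x)\{r^{-1}T,rT^{-1}\}$, invoke multiplicativity of the multivariate Gauss norm over a valued field, and recover $\rho(a_\nu)$ via the maximum-modulus identity $\rho(a)=\max_{x\in\cM(A)}|a(x)|$. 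The combinatorial kernel is the same in both proofs (a lex-/degree-maximal monomial whose coefficient cannot cancel), but the paper's version is more elementary and self-contained, needing neither $\cM(A)$ nor residue fields, while yours avoids the reduction to $n=1$ and the $f'+f''$ bookkeeping by outsourcing the work to the classical multiplicativity statement, which you only sketch and would need to prove or cite precisely (for arbitrary radii over an arbitrary complete valued field). Both the maximum-modulus property and the non-archimedean/submultiplicative behaviour of spectral radii that you use without proof are standard and available in the paper's references, so there is no gap, only a heavier reliance on external facts.
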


\begin{proof}
Using the bounded isomorphism
\[
A\{r^{-1}T, rT^{-1}\} \simeq A\{r_1^{-1}T_1, \ldots, r_n^{-1}T_n, r_1T_1^{-1}, \ldots, r_nT_n^{-1}\},
\]
we may reduce to the case $n = 1$. Write $r := r_1$ and $T := T_1$.
For $g = \sum b_i T^i \in A\{r^{-1}T, rT^{-1}\}$, define
\[
\sigma(g) := \max_i \rho(b_i) r^i.
\]

\noindent
\textbf{Step 1:} Let $f = aT^i$ for some $a \in A$ and $i \in \Z$. Then
\[
\rho_r(f) = \lim_{l \to \infty} \|f^l\|_{A_r}^{1/l} = \lim_{l \to \infty} \|a^l\|^{1/l} r^i 
= \rho(a) r^i = \sigma(f).
\]

\noindent
\textbf{Step 2:} Let $f \in A[T, T^{-1}]$, and define
\[
I' := \{i \in \Z \mid \rho(a_i) r^i = \sigma(f)\}, \quad 
f' := \sum_{i \in I'} a_i T^i, \quad 
f'' := \sum_{i \notin I'} a_i T^i.
\]
Then $\sigma(f) = \sigma(f')$ and $\sigma(f'') < \sigma(f)$. We claim:

\begin{claim}\label{cl:A-to-AK}
We have $\rho_r(f') = \sigma(f')$.
\end{claim}

\begin{proof}[Proof of \cref{cl:A-to-AK}]
Since $\rho_r$ is non-archimedean as $\rho_r \leq \|\cdot\|_{A_r}$, we have
\[
\rho_r(f') \leq \max_{i \in I'} \rho_r(a_i T^i) = \sigma(f') \quad \text{(by Step 1)}.
\]
To show the reverse inequality, let $i_0 \in I'$ be the largest integer such that $a_{i_0} \neq 0$. Then
\[
\rho_r(f') 
= \lim_{l \to \infty} \|(f')^l\|_{A_r}^{1/l} 
\geq \lim_{l \to \infty} \|a_{i_0}^l\|^{1/l} r^{i_0} = \rho(a_{i_0}) r^{i_0} = \sigma(f').
\]
Thus, $\rho_r(f') = \sigma(f')$.
\end{proof}

\noindent
By the same argument in the proof of \cref{cl:A-to-AK}, we have $\rho_r(f'') \leq \sigma(f'') < \sigma(f')=\rho_r(f')$, so
\[
\rho_r(f) = \rho_r(f') = \sigma(f') = \sigma(f).
\]

\noindent
\textbf{Step 3:} For a general $f \in A_r$, take a sequence $\{f_n\} \subset A[T, T^{-1}]$ such that $\lim_{n \to \infty} f_n = f$ in $A_r$.
Since both $\rho_r$ and $\sigma$ are continuous, we have
\[
\rho_r(f) = \lim \rho_r(f_n) = \lim \sigma(f_n) = \sigma(f),
\]
as desired.
\end{proof}

\subsection{The reduced case}
In this subsection, we prove the first assertion of \cref{main-thm}. To relate the norm with algebraic properties, we introduce the notion of a compatible system of $p$-power roots. This proof is inspired by the hint to Exercise~(ii) in \cite{Temkin10}.

\begin{definition}\label{defn:p-root}
Let $A$ be a ring , $f$ an element of $A$, and $p$ a prime number.
A sequence $\{f^{1/p^e} \in A\}_{e \geq 0}$ is called \emph{a compatible system of $p$-power roots of $f$} if $(f^{1/p^{e+1}})^p=f^{1/p^e}$ for all $e \geq 0$ and $f^{1/p^0}=f$.
\end{definition}

\begin{lemma}\label{p-power-root-to-unit}
Let $A$ be a Noetherian domain and let $f \in A$ be a non-zero element.
If there exists a compatible system of $p$-power roots of $f$, then $f$ is a unit in $A$.
\end{lemma}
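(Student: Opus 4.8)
The plan is to exploit the ascending chain condition in the Noetherian ring $A$ applied to the principal ideals generated by the successive $p$-power roots of $f$. First I would note that the defining relation $f^{1/p^e} = (f^{1/p^{e+1}})^p$ shows $f^{1/p^e} \in (f^{1/p^{e+1}})$, so one obtains an ascending chain of ideals
\[
(f) \subseteq (f^{1/p}) \subseteq (f^{1/p^2}) \subseteq \cdots \subseteq (f^{1/p^e}) \subseteq \cdots
\]
in $A$. Since $A$ is Noetherian, this chain stabilizes, so there exists $e \geq 0$ with $(f^{1/p^{e+1}}) = (f^{1/p^e})$, and in particular $f^{1/p^{e+1}} = a\, f^{1/p^e}$ for some $a \in A$.

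Next I would substitute this back into the relation defining the compatible system. Raising $f^{1/p^{e+1}} = a\, f^{1/p^e}$ to the $p$-th power gives
\[
f^{1/p^e} = (f^{1/p^{e+1}})^p = a^p (f^{1/p^e})^p .
\]
Since $(f^{1/p^e})^{p^e} = f \neq 0$, the element $f^{1/p^e}$ is non-zero, so in the domain $A$ it may be cancelled once from both sides, yielding $1 = a^p (f^{1/p^e})^{p-1}$. As $p$ is a prime number we have $p - 1 \geq 1$, hence $f^{1/p^e}$ divides $1$ and is a unit in $A$. Finally $f = (f^{1/p^e})^{p^e}$ is a power of a unit, hence a unit, as claimed.

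There is no real obstacle here; the only points needing mild care are checking that the chain of ideals is genuinely ascending (which uses only $f^{1/p^e} = (f^{1/p^{e+1}})^p$), verifying $f^{1/p^e} \neq 0$ so that cancellation is legitimate in the domain $A$, and using $p \geq 2$ to see that $(f^{1/p^e})^{p-1}$ is a non-trivial factor of $1$. The Noetherian hypothesis enters exactly once, to force stabilization of the chain, and the domain hypothesis enters exactly once, to permit the cancellation.
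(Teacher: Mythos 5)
Your proof is correct and follows essentially the same route as the paper's: the ascending chain of principal ideals $(f^{1/p^e})$, stabilization via the Noetherian hypothesis, and cancellation in the domain. The only cosmetic difference is that you raise the stabilization relation to the $p$-th power (showing $f^{1/p^e}$ is a unit and then $f$ as its power), whereas the paper raises it to the $p^{e+1}$-th power to conclude about $f$ directly; both are fine.
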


\begin{proof}
Consider the ascending chain of principal ideals:
\[
(f) \subseteq (f^{1/p}) \subseteq (f^{1/p^2}) \subseteq \cdots.
\]
Since $A$ is Noetherian, this chain stabilizes. That is, there exists an integer $e \geq 0$ such that
\[
(f^{1/p^e}) = (f^{1/p^{e+1}}).
\]
In particular, there exists an element $g \in A$ such that
\[
f^{1/p^{e+1}} = g \cdot f^{1/p^e}.
\]
Raising both sides to the $p^{e+1}$-th power yields
\[
f = g^{p^{e+1}} \cdot f^p.
\]
Since $A$ is a domain and $f \neq 0$, we can cancel $f$ to obtain
\[
f^{p-1} \cdot g^{p^{e+1}} = 1.
\]
Thus, $f$ is invertible in $A$, as claimed.
\end{proof}

\begin{lemma}\label{exists-p-th-root}
Let $(A,|\cdot|)$ be a non-archimedean Banach ring such that $|\cdot|$ is multiplicative, $f$ an element of $A$, and $g$ a unit of $A$.
Let $p$ be a prime number with $|p|=1$.
We assume that there exists a $p$-th root $g^{1/p} \in A$ of $g$ and $|f-g| < |f|$.
Then there exists a $p$-th root $f^{1/p} \in A$ of $f$ with $|f^{1/p}-g^{1/p}|<|f^{1/p}|$.
\end{lemma}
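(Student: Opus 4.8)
The plan is to construct $f^{1/p}$ by a Hensel-type (Newton) iteration, after a normalization that makes the derivative at the approximate root have norm exactly $1$. First I would record the elementary consequences of the hypotheses. From $|f-g|<|f|$ and the ultrametric inequality we get $|f|=|g|$, so in particular $f\neq 0$; since $g\in A^\times$ and $(g^{1/p})^p=g$, the element $g^{1/p}$ is also a unit, with $|g^{1/p}|=|g|^{1/p}$. Put $u:=g^{-1}f-1=g^{-1}(f-g)\in A$; then $|u|=|f-g|/|g|<1$. It now suffices to produce $Y\in A$ with $Y^p=1+u$ and $|Y-1|<1$, because then $f^{1/p}:=g^{1/p}Y$ satisfies $(f^{1/p})^p=g(1+u)=f$, and, using $|Y|=1$ (which follows from $|Y-1|<1$), we get $|f^{1/p}|=|g^{1/p}|$ and $|f^{1/p}-g^{1/p}|=|g^{1/p}|\,|Y-1|<|g^{1/p}|=|f^{1/p}|$, which is the asserted estimate.

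To produce $Y$, I would run Newton's method for $h(Y):=Y^p-(1+u)$ with initial point $Y_0:=1$. The decisive inequality is $|h(Y_0)|=|u|<1=|h'(Y_0)|^2$, where $h'(Y_0)=p$ and $|p|=1$ by hypothesis; note this works uniformly in $p$, including $p=2$, precisely because we divided out $g$ first, so that the relevant derivative is $p$ rather than $p(g^{1/p})^{p-1}$. Setting $Y_{n+1}:=Y_n-h(Y_n)/h'(Y_n)$, one checks by induction that $|Y_n-1|\le|u|<1$ for all $n$ (hence $|Y_n|=1$ and $|h'(Y_n)|=|pY_n^{p-1}|=1$), and that $|Y_{n+1}-Y_n|\le|u|^{2^n}\to 0$ via the Taylor identity $h(Y_{n+1})=\sum_{k\ge 2}\binom{p}{k}Y_n^{p-k}(Y_{n+1}-Y_n)^k$ together with $|\binom{p}{k}|\le 1$ (all integers have norm $\le|1|=1$). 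Since $A$ is complete, $(Y_n)$ converges to some $Y\in A$ with $h(Y)=0$ and $|Y-1|\le|u|<1$, as needed.

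I expect the point requiring the most care to be that the Newton iteration genuinely stays inside $A$: each step requires $h'(Y_n)=pY_n^{p-1}$ to be invertible in $A$. Invertibility of $Y_n$ is free from $|Y_n-1|<1$ together with completeness (the geometric series $\sum_k(1-Y_n)^k$ converges to $Y_n^{-1}$), so the real content is the factor $p$; since $|p|=1$ this is harmless whenever $p\in A^\times$ — which holds in the settings where the lemma is applied, as there $A$ contains a complete valued field in which $p$ is a unit (the case of residue characteristic $p$ being excluded by $|p|=1$). In general one would instead carry out the iteration in the completed fraction field of $A$, in which $p$ is automatically a unit, and check that the resulting root already lies in $A$. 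An alternative, avoiding iteration, is to write $Y$ directly as the binomial series $\sum_{n\ge 0}\binom{1/p}{n}u^n$ and verify convergence in $A$ using $|p|=1$ and $|u|<1$; the bookkeeping is essentially the same, and I would not spell it out in detail.
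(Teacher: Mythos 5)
Your proof is correct and follows essentially the same route as the paper's: both first reduce to $g=1$ via the normalization $u=g^{-1}f-1$, noting $|f|=|g|$ and $|u|<1$ by multiplicativity, and then construct the root of $1+u$ by a convergent successive approximation starting at $1$ — the paper uses the frozen-derivative step $g_{m+1}=-h_m/p$ (linear convergence), you use the full Newton step (quadratic convergence), an immaterial difference. Your caveat that dividing by $p$ requires $p\in A^\times$ is well taken, but the paper's proof makes exactly the same implicit assumption when it sets $g_1:=\frac{1}{p}(f-1)$, and it is harmless in all of the lemma's applications, where $A$ is an algebra over a valued field in which $|p|=1$ forces $p$ to be a unit.
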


\begin{proof}
Since we have $|f-g|<|f|$ and $|\cdot|$ is non-archimedean, it follows that $|f|=|g|$.
Since $|\cdot|$ is multiplicative, we obtain
\[
|g^{-1}f-1|=|g^{-1}|\cdot |f-g| < |g^{-1}f|=1.
\]
Therefore, if there exists a $p$-th root $(g^{-1}f)^{1/p}$ of $g^{-1}f$ with $|(g^{-1}f)^{1/p}-1|<1$, then $f^{1/p}:=g^{1/p}(g^{-1}f)^{1/p}$ is a $p$-th root of $f$ with
\[
|f^{1/p}-g^{1/p}|=|g^{1/p}|\cdot |(g^{-1}f)^{1/p}-1|<|g^{1/p}|.
\]
In conclusion, we may assume that $g=1$.
Next, we prove the following assertion:
\begin{claim}\label{cl:exist-p-root}
There exist $\{g_m \in A\}_{m \geq 1},\{h_m \in A\}_{m\geq 1}$ such that
\begin{enumerate}
    \item $(1+g_1+\cdots+g_m)^p=f+h_m$,
    \item $|g_1|=|f-1|$, 
    \item $|g_m| \leq |g_1|^m$, and
    \item $|h_m| \leq |g_1|^{m+1}$
\end{enumerate}
for every $m \geq 1$.
\end{claim}
\begin{proof}[Proof of \cref{cl:exist-p-root}]
We prove \cref{cl:exist-p-root} by induction on $m$.
For $m=1$, we set $g_1:=\frac{1}{p}(f-1)$ and $h_1:=(1+g_1)^p-f$.
Then conditions (1), (2), (3) are satisfied, thus we check the condition (4).
We have
\begin{align*}
    |h_1|
    &=|(1+g_1)^p-f|=|1+pg_1-f +\sum_{i= 2}^{p} \binom{p}{i} g_1^{i}| \\
    &=|\sum_{i= 2}^{p} \binom{p}{i} g_1^{i}| = |g_1|^2|\sum_{i= 2}^{p} \binom{p}{i} g_1^{i-2}| \leq |g_1^2|,
\end{align*}
thus condition (4) is satisfied.
Next, we assume that there exist $h_1,\ldots,h_{m}$, $g_1,\ldots,g_{m}$ satisfying (1)--(4).
We set $g_{m+1}:=-\frac{h_m}{p}$ and $h_{m+1}:=(1+g_1+\cdots+g_{m+1})^p-f$.
Then we have
\[
|g_{m+1}|=|h_m| \leq |g_1|^{m+1}
\]
by condition (4), condition (3) satisfied.
Finally, we check condition (4).
We have
\begin{align*}
    h_{m+1}
    &= (1+g_1+\cdots+g_{m+1})^p-f \\
    &=(1+g_1+\cdots+g_m)^p+p(1+g_1+\cdots+g_m)^{p-1}g_{m+1}+g_{m+1}^2\alpha -f \\
    &\eqtag{1} h_m+p(1+g_1+\cdots+g_m)^{p-1}g_{m+1}+\alpha \\
    &= h_m+pg_{m+1}+\beta+\alpha \\
    &=\beta+g_{m+1}^2\alpha, 
\end{align*}
where
\begin{align*}
    \alpha&=\sum_{i=2}^p \binom{p}{i}(1+g_1+\cdots+g_m)^{p-i}g_{m+1}^{i}, \\
    \beta&= pg_{m+1}\left((1+g_1+\cdots+g_{m})^{p-1}-1\right),
\end{align*}
and $(\star_1)$ follows from the condition (1).
Since $|\alpha| \leq |g_{m+1}|^2 \leq |g_1|^{m+2}$ and $|\beta| \leq |g_1g_{m+1}| \leq |g_1|^{m+2}$, we obtain $|h_{m+1}| \leq |g_1|^{m+2}$, as desired.
\end{proof}
We take $\{g_m\}_{m \geq 1}$ and $\{h_m\}_{m \geq 1}$ as in \cref{cl:exist-p-root}.
We set $f^{1/p}:=1+\sum_{i=1}^\infty g_i$, where the existence follows from $|g_1|=|f-1|<1$ and condition (3) in \cref{cl:exist-p-root}.
Thus, we have 
\[
(f^{1/p})^p \eqtag{2} \lim_{m \to \infty}(f+h_m) \eqtag{3} f,
\]
where $(\star_2)$ follows from condition \cond{1} and $(\star_3)$ follows from condition \cond{4}.
In particular, $f^{1/p}$ is a $p$-th root of $f$.
Furthermore, we have
\[
|f^{1/p}-1| \leq |g_1| <1,
\]
thus $f^{1/p}$ satisfies the desired conditions.
\end{proof}

\begin{notation}\label{notation-non-triv}
Let $(k,|\cdot|)$ be a complete non-archimedean valued field with $|k^\times| \neq \{1\}$.
Let $p$ be a prime number with $|p|=1$.
\end{notation}

\begin{lemma}\label{norm-f-in}
We use \cref{notation-non-triv}.
We assume $k$ is an algebraically closed field.
Let $(A,||\cdot||_A)$ be a $k$-affinoid algebra and $f \in A \backslash \{0\}$.
If there exits a compatible system of $p$-power roots $\{f^{1/p^e}\}$, then $\rho_A(f) \in |k^\times|$.
\end{lemma}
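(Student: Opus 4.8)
The plan is to reduce, via the Shilov boundary, to the case of a reduced $k$-affinoid algebra with multiplicative spectral norm, where \cref{p-power-root-to-unit} forces $f$ to be a unit, and then to read off $\rho_A(f)\in|k^\times|$ from the graded reduction. First I would check $\rho_A(f)\neq 0$: since $A$ is Noetherian, its nilradical is nilpotent, say $\mathrm{nil}(A)^c=0$, and if $f$ were nilpotent then so would be every $f^{1/p^e}$ (if $f^N=0$ then $(f^{1/p^e})^{p^eN}=0$), whence $(f^{1/p^e})^c=0$ and, taking $e$ with $p^e\geq c$, $f=(f^{1/p^e})^{p^e}=0$, a contradiction. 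Then I would pick $x\in\Gamma(A)$ with $|f(x)|=\rho_A(f)$ and apply \cref{Shilov-boundary}(2) to get a bounded $k$-algebra homomorphism $\psi_x\colon A\to A_x$ with $A_x$ reduced, $\rho_{A_x}$ multiplicative and $\rho_{A_x}(\psi_x(f))=|f(x)|=\rho_A(f)>0$. Since $\psi_x$ is a ring homomorphism, $\{\psi_x(f^{1/p^e})\}$ is a compatible system of $p$-power roots of the nonzero element $\psi_x(f)$, and $\rho_A(f)=\rho_{A_x}(\psi_x(f))$, so it suffices to treat $(A_x,\psi_x(f))$. Thus I may assume $A$ reduced, $\rho_A$ multiplicative, $f\neq 0$.

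In this situation $A$ is a domain (a multiplicative norm on a reduced ring is a valuation: $gh=0$ forces $\rho_A(g)=0$ or $\rho_A(h)=0$, hence $g=0$ or $h=0$), and $A$ is Noetherian, so \cref{p-power-root-to-unit} shows $f$ is a unit; the same applies to each $f^{1/p^e}$, which carries the compatible system $\{f^{1/p^{e+j}}\}_{j\geq 0}$. Passing to the graded reduction $\widetilde A$, multiplicativity of $\rho_A$ gives $\widetilde{gh}=\widetilde g\,\widetilde h$, so $\widetilde A$ is a graded domain and $\widetilde f$, $\widetilde{f^{1/p^e}}$ are homogeneous units of degrees $r:=\rho_A(f)$ and $r^{1/p^e}$, with $(\widetilde{f^{1/p^e}})^p=\widetilde{f^{1/p^{e-1}}}$.

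Let $\Gamma\subseteq\R_{>0}$ be the group of degrees of homogeneous units of $\widetilde A$; it contains $|k^\times|$. Since $\widetilde A$ is a graded-finitely-generated $\widetilde k$-algebra, every homogeneous element has degree in the subgroup of $\R_{>0}$ generated by $|k^\times|$ and the finitely many degrees of a set of algebra generators, so $\Gamma/|k^\times|$ is a subgroup of a finitely generated abelian group, hence itself finitely generated. In $G:=\Gamma/|k^\times|$ the class of $r$ equals $p^n$ times that of $r^{1/p^n}$ for every $n$, so it lies in $\bigcap_n p^nG$, which for a finitely generated $G$ is contained in the torsion subgroup; thus $r^N\in|k^\times|$ for some $N\geq 1$. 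Finally, $k$ algebraically closed makes $|k^\times|$ divisible, so $r^N\in|k^\times|$ together with torsion-freeness of $\R_{>0}$ yields $r\in|k^\times|$, as desired.

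The step I expect to be the crux is the finite generation of $\rho_A(A^\times)/|k^\times|$: this is where the finiteness built into affinoid algebras (equivalently, $\widetilde A$ being of graded-finite-type over $\widetilde k$) is genuinely used, and it cannot be bypassed by passing to the completed residue field at the Shilov point, where the analogous statement is false — such a field may well contain a compatible system of $p$-power roots of an element whose absolute value lies outside $|k^\times|$. The remaining ingredients, namely the reduction to the reduced multiplicative case and the application of \cref{p-power-root-to-unit}, are routine.
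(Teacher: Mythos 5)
Your proof is correct, and it reaches the conclusion by a genuinely different route than the paper, although both arguments ultimately rest on the same two pillars: the value set of an affinoid algebra is finitely generated modulo $|k^\times|$, and $|k^\times|$ is divisible when $k$ is algebraically closed. The paper first extends the ground field to $K_r$ (using \cref{A-to-AK} to see this is a spectral isometry) so as to work with a strictly $K_r$-affinoid algebra, then invokes Noether normalization and the finiteness of $\widetilde{\mathcal T}\to\widetilde A$ from \cite{Temkin04}*{Proposition~3.1} to pin down $\rho(A)\setminus\{0\}$ inside $|k^\times|\langle r_1^{1/l},\ldots,r_n^{1/l}\rangle$, and concludes from the multiplicative independence of the $r_i$ over $|k^\times|$ that the $r_i$-exponents of $\rho(f)$ must vanish. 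You instead reduce via \cref{Shilov-boundary}(2) to a reduced quotient with multiplicative spectral norm (hence a Noetherian domain), use \cref{p-power-root-to-unit} to make $f$ and all its roots units, and run an abstract finitely-generated-abelian-group argument on the degree group of homogeneous units of the graded reduction: infinite $p$-divisibility forces torsion modulo $|k^\times|$, and divisibility of $|k^\times|$ plus torsion-freeness of $\R_{>0}$ finishes. Your route avoids both the ground field extension and Noether normalization, at the cost of needing that the graded reduction of a $k$-affinoid algebra is of graded finite type over $\widetilde k$ (which the paper also uses, via \cite{Temkin04}, so this is not an extra burden); your explicit treatment of the case $\rho_A(f)=0$ at the outset is a point the paper glosses over. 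One small remark: the detour through units is not strictly necessary, since the degrees of arbitrary nonzero homogeneous elements already lie in a monoid that is finitely generated over $|k^\times|$, and the subgroup of $\R_{>0}$ they generate is finitely generated modulo $|k^\times|$; but invoking \cref{p-power-root-to-unit} is harmless and makes the group structure immediate.
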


\begin{proof}
There exists a tuple $r=(r_1,\ldots,r_n)$ of positive real numbers linearly independent over $|k^\times|$ such that $A \widehat{\otimes}_{k}K_r$ is a strictly $K_r$-affinoid algebra.
Since the homomorphism $A \to A \widehat{\otimes}_{k}K_r$ is isometry with respect to spectral radii by \cref{A-to-AK},  we may assume that $A$ is a strictly $K_r$-affinoid algebra.
By the Noether normalization (\cite{BGR}*{Corollary~6.1.2.2}), there exists a finite and admissible  injection $\varphi \colon \mathcal{T}:= K_r\{T_1,\ldots,T_d\} \to A$.
By \cite{Temkin04}*{Proposition~3.1~(iii)}, the reduction map $\wt{\mathcal{T}} \to \wt{A}$ is finite.
We take a homogeneous generator $h_1,\ldots,h_m \in \wt{A}\backslash \{0\}$ and set $s_i:=\rho_A(f_i)$.
Then for every $g \in A$, there exists $g_1,\ldots,g_m \in \mathcal{T}$ such that $\wt{g}=\sum_{i=1}^m \wt{g_i}h_i$, and in particular, we have $\rho(g)=\sum_{i=1}^m \rho(g_i)s_i$.
In conclusion, we obtain $\rho(A)=\sum_{i=1}^m \rho(\mathcal{T})s_i$.
On the other hand, since $s_i \in ||\mathcal{T}^\times||^{\Q}=|K_r^\times|^\Q$, there exists a positive integer $l$ such that $s_i^l \in |K_r^\times|^\Q$, and in particular, we have $\rho(A) \backslash \{0\} \subseteq |k^\times|<r_1^{1/l},\ldots,r_n^{1/l}>$.
We set $r:=\rho(f)$ and write $e=\lambda r_1^{i_1/l}\cdots r_n^{i_n/l}$ for some $i_1,\ldots,i_n \in \Z_{\geq 0}$ and $\lambda \in |k^\times|$.
Since $(r_1,\ldots,r_n)$ is linearly independent over $|k^\times|$, we obtain
\[
\rho(f^{1/p^e})=r^{1/p^e}=\lambda^{1/p^e}r_1^{i_1/p^el}\cdots r_n^{i_n/p^el} \in \rho(A).
\]
Therefore, we have $i_1=\cdots=i_n=0$ by $\rho(A) \backslash \{0\} \subseteq |k^\times|<r_1^{1/l},\ldots,r_n^{1/l}>$, and in particular, it follows that $\rho(f)=\lambda \in |k^\times|$.
\end{proof}

\begin{lemma}\label{f-in-k}
We use \cref{notation-non-triv}.
We assume $k$ is an algebraically closed field.
Let $r=(r_1,\ldots,r_m)$ be a tuple of positive real numbers linearly independent over $|k^\times|$.
Let $K_r \to L$ be a finite extension of fields and $f \in L$ with $|f|_L \in |k^\times|$.
If $f-a$ admits a compatible system of $p$-power roots for every $a \in k$ with $|a| \geq |f|_L$, then $f \in k$.    
\end{lemma}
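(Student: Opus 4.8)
The plan is to descend, via the characteristic polynomial of $f$ over $K_r$ and the multiplicative map $N_{L/K_r}$, to a statement about a \emph{single} element of $K_r$, whose Laurent expansion in $T=(T_1,\dots,T_m)$ turns out to be severely constrained by the existence of compatible systems of $p$-power roots.

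\textit{Reductions.} Since $|f|_L\in|k^\times|$ and $k$ is algebraically closed, choose $c\in k^\times$ with $|c|=|f|_L$; for $a'\in k$ with $|a'|\ge 1$ we have $|ca'|\ge|f|_L$, so $f-ca'$ admits a compatible system of $p$-power roots in $L$, and dividing these by a compatible system of $p$-power roots of $c$ in $k$ shows that $f/c-a'$ does too. As $f\in k\iff f/c\in k$, we may assume $|f|_L=1$; if $f\in k$ we are done, so assume $f\notin k$. Let $d:=[L:K_r]$ and let $P(X)=X^d+c_{d-1}X^{d-1}+\dots+c_0\in K_r[X]$ be the characteristic polynomial of multiplication by $f$ on $L$. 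Its roots in $\overline{K_r}$ are the conjugates of $f$, all of norm $|f|_L=1$, so $P\in\mathcal{O}_{K_r}[X]$ and $|c_0|=1$. For $a\in\mathcal{O}_k^\times$ we have $|a|=1=|f|_L$, hence $f-a$ admits a compatible system of $p$-power roots in $L$; applying $N_{L/K_r}$ (multiplicative, so it sends this to a compatible system of $p$-power roots of $N_{L/K_r}(f-a)=(-1)^dP(a)$) and multiplying by a compatible system of $p$-power roots of $(-1)^d$ in $k$, we conclude that $P(a)\in K_r$ admits a compatible system of $p$-power roots in $K_r$, for every $a\in\mathcal{O}_k^\times$.

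\textit{Rigidity in $K_r$.} Because $r_1,\dots,r_m$ are linearly independent over $|k^\times|$, a nonzero $N=\sum_\nu N_\nu T^\nu\in K_r$ has a \emph{unique} index $\lambda(N)\in\Z^m$ with $|N_{\lambda(N)}|r^{\lambda(N)}=\rho_{K_r}(N)$ (using \cref{A-to-AK}), and moreover $\lambda(N^p)=p\,\lambda(N)$: by the ultrametric inequality the coefficient of $T^{p\lambda(N)}$ in $N^p$ has norm exactly $\rho_{K_r}(N)^p$ while every other coefficient contributes strictly less. Hence if $N$ admits a compatible system of $p$-power roots in $K_r$, then $\lambda(N)=p^e\lambda(N^{1/p^e})\in p^e\Z^m$ for all $e\ge 0$, so $\lambda(N)=0$: the leading term of such an $N$ is its constant term $N_0\in k$, and $\rho_{K_r}(N)=|N_0|>|N_\nu|r^\nu$ for all $\nu\neq 0$. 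Now write $P=\sum_\nu P_\nu T^\nu$ with $P_\nu\in k[X]$. Then $P_0$ is monic of degree $d$ with $P_0(0)$ equal to the $T^0$-coefficient of $c_0$, of norm $1$ (again by linear independence of the $r_i$), so the roots $b_1,\dots,b_d$ of $P_0$ lie in $\mathcal{O}_k^\times$; and for $\nu\neq 0$ we have $\deg P_\nu\le d-1$ and $(\text{Gauss norm of }P_\nu)\cdot r^\nu<1$. Suppose $P_{\nu^*}\neq 0$ for some $\nu^*\neq 0$. Since $\deg P_{\nu^*}<\deg P_0$, the monic polynomial $P_0$ does not divide $P_{\nu^*}$, so there is a root $b$ of $P_0$ with $\mathrm{ord}_b P_{\nu^*}<\mathrm{ord}_b P_0$; write $P_0=(X-b)^\mu g_0$, $P_{\nu^*}=(X-b)^{\mu'}g_*$ with $\mu'<\mu$ and $g_0(b),g_*(b)\neq 0$. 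For $t\in\m_k\setminus\{0\}$ with $|t|$ small and $a:=b+t\in\mathcal{O}_k^\times$ we have $|P_0(a)|=|t|^\mu|g_0(b)|$ and $|P_{\nu^*}(a)|\,r^{\nu^*}=|t|^{\mu'}|g_*(b)|\,r^{\nu^*}$; since $\mu-\mu'\ge 1$, shrinking $|t|$ forces $|P_0(a)|<|P_{\nu^*}(a)|r^{\nu^*}$, whence $\rho_{K_r}(P(a))>|P_0(a)|$ and $\lambda(P(a))\neq 0$ — contradicting the rigidity statement for the nonzero element $P(a)$. Therefore $P_\nu=0$ for all $\nu\neq 0$, i.e.\ $P\in k[X]$, so $P=\prod_j(X-b_j)$ with $b_j\in k$, and $\prod_j(f-b_j)=P(f)=0$ in the field $L$ gives $f=b_j\in k$ for some $j$.

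\textit{Main obstacle.} The essential difficulty is the passage to the characteristic polynomial. Working directly in $L$ one only obtains, by a recursion, that $f$ is an infinite sum $\sum_e b_e$ of elements $b_e\in k$ of strictly decreasing norm, which need not converge — the graded reduction of $L$ is blind to the $T$-direction, since $\widetilde{L}^{(\gamma)}=\widetilde{k}^{(\gamma)}$ for every $\gamma\in|k^\times|$ (because $\widetilde{L}^{(1)}$ is finite over the algebraically closed residue field of $k$). Descending to $P$ over $K_r$ reinstates the $T$-grading, and the purely numerical input that a monic polynomial of degree $d$ cannot divide a nonzero polynomial of degree $<d$ is precisely what converts the rigidity of $p$-power roots in $K_r$ into the vanishing of the off-constant $T$-coefficients of $P$. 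Verifying that the hypothesis transfers cleanly along $N_{L/K_r}$ and along the normalisation $|f|_L=1$, and that only the "boundary" values $|a|=|f|_L$ of the hypothesis are needed, are the routine but necessary bookkeeping steps.
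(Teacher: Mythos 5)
Your proof is correct, but it takes a genuinely different route from the paper's. The paper works with the minimal polynomial $P$ of $f$ over $K_r$ and a \emph{single} well-chosen $a$ — a maximal-norm root of the degree-zero part $P_0$ of $P$ — and then invokes \cref{norm-f-in} (whose proof goes through Noether normalization and graded reduction) to conclude $|f-a|_L\in|k^\times|$, while the explicit formula $|f-a|_L^n=\max\{|P_0(a)|,\max_{\nu\neq0}|P_\nu(a)|r^\nu\}$ together with $P_0(a)=0$ and the multiplicative independence of $r$ over $|k^\times|$ forces $|f-a|_L\notin|k^\times|$ unless $f=a$. You instead descend to $K_r$ via $N_{L/K_r}$ (correctly noting that multiplicativity transports compatible systems of $p$-power roots, and that $(-1)^d$ and the normalizing constant $c$ admit such systems in the algebraically closed $k$), prove from scratch the rigidity statement $\lambda(N^p)=p\,\lambda(N)$ for the unique dominant Laurent index, and then use a one-parameter family of $a$'s near a root of $P_0$ at which some $P_{\nu^*}$ vanishes to lower order, killing every nonconstant $T$-coefficient of the characteristic polynomial and concluding $P\in k[X]$. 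Both arguments hinge on the same mechanism — independence of $r_1,\dots,r_m$ over $|k^\times|$ singles out the constant coefficient as the only one whose contribution can lie in $|k^\times|$ — but yours is self-contained within $K_r$ (it does not call on \cref{norm-f-in}, hence not on the affinoid reduction theory, inside this lemma) and yields the stronger intermediate fact that the characteristic polynomial of $f$ has coefficients in $k$; the paper's is shorter given \cref{norm-f-in} as a black box and consumes the hypothesis at only one value of $a$. All the delicate points in your write-up check out: the uniqueness and attainment of $\lambda(N)$, the strict domination of $N_\lambda^p$ in the coefficient of $T^{p\lambda(N)}$ of $N^p$ (via the second-largest value $\rho_2<\rho(N)$), the integrality and unit norm of the roots of $P_0$, and the degree argument $\deg P_{\nu^*}<\deg P_0$ guaranteeing a root where the orders differ.
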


\begin{proof}
Let  
\[
P(X) = X^n + f_1 X^{n-1} + \cdots + f_n \in K_r[X]
\]  
be the minimal polynomial of \( f \). Define  
\[
P_a(X) := P(X + a) = X^n + f_{1,a} X^{n-1} + \cdots + f_{n,a} \in K_r[X],
\]  
so that \( f_{n,a} = a^n + f_1 a^{n-1} + \cdots + f_n = P(a) \).  
Since \( P_a(f - a) = P(f) = 0 \) and \( a \in k \), the polynomial \( P_a(X) \) is the minimal polynomial of \( f - a \).
Write \( f_i = \sum a_{i,\nu} T^\nu \in K_r \) with \( a_{i,\nu} \in k \).  
Let \( a \in k \) be a root of the polynomial \( X^n + a_{1,0} X^{n-1} + \cdots + a_{n,0} \) with maximal value.  
Then we have:
\begin{equation} \label{eq:a-f}
|a| \overset{(\star_1)}{\geq} |a_{n,0}|^{1/n} \eqtag{2} |f_n|_{K_r}^{1/n} \eqtag{3} |f|_L,
\end{equation}
where \( (\star_1) \) follows from the fact that \( a_{n,0} \) is the product of the roots of the polynomial \( X^n + a_{1,0} X^{n-1} + \cdots + a_{n,0} \), and \( (\star_3) \) holds because \( |f|_L = |\mathrm{Nm}(f)|_{K_r}^{1/n} = |f_{n,0}|^{1/n} \).
We now verify equation \( (\star_2) \).  
Since \( |f|_L \in |k^\times| \) and by \( (\star_3) \), we have \( |f_n|_{K_r} \in |k^\times| \).  
Because \( (r_1, \ldots, r_n) \) are independent over \( |k^\times| \), we conclude that \( |f_n| = |a_{n,0}| \), and so equation \( (\star_2) \) holds.
Therefore, by assumption, \( f - a \) admits a compatible system of \( p \)-power roots, and in particular, \( |f - a|_L \in |k^\times| \) by \cref{norm-f-in}.  
Moreover, we compute:
\begin{align*}
    |f - a|_L 
    &= |f_{n,a}|_{K_r}^{1/n} \\
    &= \max \left\{ \left| a^n + a_{1,0} a^{n-1} + \cdots + a_{n,0} \right|, \ \max_{\nu \neq 0} \left| a_{1,\nu} a^{n-1} + \cdots + a_{n,\nu} \right| r^\nu \right\}^{1/n}.
\end{align*}
Since \( a^n + a_{1,0} a^{n-1} + \cdots + a_{n,0} = 0 \), we see that \( |f - a|_L \notin |k^\times| \) if \( f \neq a \).  
In conclusion, this forces \( f = a \in k \), as desired.
\end{proof}

\begin{lemma}\label{reduce-to-alg-cl}
We use \cref{notation-non-triv}.
Let $r:=(r_1,\ldots,r_n)$ be a tuple of positive real numbers and 
\[
f \in k\{r^{-1}T\}=k\{r_1^{-1}T_1,\ldots,r_n^{-1}T_n\}.
\]
For $a \in \bar{k}$ with $|a|_{\bar{k}} > \|f\|_{k\{r^{-1}T\}}$, the element $f + a \in k\{r^{-1}T\} \otimes_k \bar{k}$ admits a compatible system of $p$-power roots.
\end{lemma}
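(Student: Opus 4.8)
The plan is to take a compatible system of $p$-power roots of the constant $a$ inside $\bar k$, and then to ``lift'' these roots, one $p$-th power at a time, to roots of $f+a$ using \cref{exists-p-th-root}, carrying out the whole construction inside an increasing tower of finite extensions of $k$ so that nothing escapes $k\{r^{-1}T\}\otimes_k\bar k$.

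First I would fix a compatible system $\{a^{1/p^e}\in\bar k\}_{e\geq 0}$ with $a^{1/p^0}=a$: this exists by ordinary recursion since $\bar k$ is algebraically closed (having chosen $a^{1/p^e}$, let $a^{1/p^{e+1}}$ be any root of $X^p-a^{1/p^e}$). Put $k_e:=k(a^{1/p^e})$, so that $k\subseteq k_0\subseteq k_1\subseteq\cdots$ is an increasing chain of finite extensions of $k$ (hence each $k_e$ is complete with the unique extension of $|\cdot|$), and set $A_e:=k_e\{r^{-1}T\}=k\{r^{-1}T\}\otimes_k k_e$. Each $A_e$ is a Banach $k_e$-algebra whose Gauss norm $\|\cdot\|_{A_e}$ is multiplicative — equivalently, the graded reduction $\widetilde{A_e}$ is a graded integral domain, being a graded polynomial ring over $\widetilde{k_e}$ — and whose restriction to $A_{e-1}$ is $\|\cdot\|_{A_{e-1}}$. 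Finally, since $|a|>\|f\|_{k\{r^{-1}T\}}\geq 0$, the constant $a$ is a non-zero, hence a unit, in every $A_e$, and $\|f+a\|_{A_e}=|a|$ by the ultrametric inequality.

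Next I would run an induction on $e$ producing $F_e\in A_e$ with $F_0=f+a$, $(F_{e+1})^p=F_e$, and $\|F_e-a^{1/p^e}\|_{A_e}<\|F_e\|_{A_e}$. For the base case, apply \cref{exists-p-th-root} in the multiplicative non-archimedean Banach ring $(A_1,\|\cdot\|_{A_1})$ (note $|p|=1$ by \cref{notation-non-triv}) with element $f+a$, unit $a$, and $p$-th root $a^{1/p}\in A_1$; the hypothesis $\|(f+a)-a\|_{A_1}=\|f\|<|a|=\|f+a\|_{A_1}$ holds, so we obtain $F_1:=(f+a)^{1/p}\in A_1$ with $(F_1)^p=f+a$ and $\|F_1-a^{1/p}\|_{A_1}<\|F_1\|_{A_1}$. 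For the inductive step, apply \cref{exists-p-th-root} in $(A_{e+1},\|\cdot\|_{A_{e+1}})$ with element $F_e$, unit $a^{1/p^e}$ (a non-zero constant), and $p$-th root $a^{1/p^{e+1}}\in A_{e+1}$; the required inequality $\|F_e-a^{1/p^e}\|_{A_{e+1}}<\|F_e\|_{A_{e+1}}$ is the inductive hypothesis, transported along the tower since the norms are compatible. This yields $F_{e+1}\in A_{e+1}$ with $(F_{e+1})^p=F_e$ and $\|F_{e+1}-a^{1/p^{e+1}}\|_{A_{e+1}}<\|F_{e+1}\|_{A_{e+1}}$. Then $\{F_e\}_{e\geq 0}$ is a compatible system of $p$-power roots of $f+a$, and since $F_e\in A_e=k\{r^{-1}T\}\otimes_k k_e$ and $\bigcup_e\bigl(k\{r^{-1}T\}\otimes_k k_e\bigr)\subseteq k\{r^{-1}T\}\otimes_k\bar k$, the whole system lies in $k\{r^{-1}T\}\otimes_k\bar k$, as required.

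There is no serious analytic obstacle here; the one point needing care is the bookkeeping that keeps the construction inside the \emph{uncompleted} ring $k\{r^{-1}T\}\otimes_k\bar k$. The key observation is that each $A_e=k_e\{r^{-1}T\}$ is already complete (a finite extension of a complete field is complete), so the convergent series that defines the $p$-th root in \cref{exists-p-th-root} lives in $A_e$ itself rather than only in some further completion, and $k\{r^{-1}T\}\otimes_k\bar k=\varinjlim_{k'/k\ \mathrm{finite}}k'\{r^{-1}T\}$ contains every $A_e$. The only external inputs are the two standard facts invoked above: that the Gauss norm on a Tate algebra over a complete valued field is multiplicative, and that $|p|=1$, so that \cref{exists-p-th-root} is applicable.
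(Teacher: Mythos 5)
Your proposal is correct and follows essentially the same route as the paper: a compatible system of $p$-power roots of $a$ in $\bar k$ is fixed, and \cref{exists-p-th-root} is applied iteratively in the tower of Tate algebras $k(a^{1/p^e})\{r^{-1}T\}$ (each complete with multiplicative Gauss norm), the inequality $\|F_e-a^{1/p^e}\|<\|F_e\|$ produced at each stage serving as the hypothesis for the next. Your write-up merely makes explicit the induction that the paper compresses into ``iterating this process.''
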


\begin{proof}
Fix \( a \in \bar{k} \) such that \( |a|_{\bar{k}} > \|f\|_{k\{r^{-1}T\}} \), and choose a compatible system of \( p \)-power roots \( \{a^{1/p^e} \in \bar{k}\} \) of \( a \).  
Let \( k_1 := k(a^{1/p}) \), equipped with the unique extension of the valuation \( |\cdot|_k \).  
Since the map \( k\{r^{-1}T\} \to k_1\{r^{-1}T\} \) is an isometry and the norm \( \|\cdot\|_{k_1\{r^{-1}T\}} \) is multiplicative, there exists a \( p \)-th root \( (f + a)^{1/p} \) of \( f + a \) such that  
\[
\|(f + a)^{1/p} - a^{1/p}\| < \|(f + a)^{1/p}\| = \|a^{1/p}\|
\]  
by \cref{exists-p-th-root}.
Set \( f_1 := (f + a)^{1/p} - a^{1/p} \).  
Then \( \|f_1\| < \|(f + a)^{1/p}\| = \|a^{1/p}\| \).  
By replacing \( f \) with \( f_1 \), \( a \) with \( a^{1/p} \), and \( k_1 \) with \( k_1(a^{1/p^2}) \), we may repeat the above argument to obtain a \( p^2 \)-th root of \( f + a \) in \( k_2\{r^{-1}T\} \). 
Iterating this process, we obtain a compatible system of \( p \)-power roots of \( f + a \) in \(k\{r^{-1}T\} \otimes_k \bar{k}\), as desired.
\end{proof}

\begin{lemma}\label{essential-lemma}
We use \cref{notation-non-triv}.
We assume $k$ is an algebraically closed field.
Let $r=(r_1,\ldots,r_n)$ be a tuple of positive real integer independent over $|k^\times|^\Q$.
Let $A$ be a reduced strictly $K_r$-affinoid algebra such that $\rho:=\rho_A$ is multiplicative.
Let $\{f_n \in A\}_{n \geq 1}$ and $\{v_n \in |k^\times|\}_{n \geq 1}$ such that $f:=\lim_{n \to \infty} f_n$ exists and $f_n+a$ admits a compatible system $p$-power roots for $a \in k$ with $|a| \geq v_n$.
If $f \neq 0$, then there exists $n_0 \geq 1$ such that $f_n,f \in k$ for $n \geq n_0$.
\end{lemma}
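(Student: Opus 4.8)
The plan is to reduce the statement, for each large $n$, to the one-variable criterion \cref{f-in-k} by evaluating $f_n$ at a suitable rigid point, and then deduce $f\in k$ from completeness. First I would stabilize the norm: since $A$ is reduced and $\rho=\rho_A$ is multiplicative, $\rho$ is a non-archimedean norm on $A$, so from $f=\lim f_n\ne 0$ there is $n_0$ with $\rho(f_n-f)<\rho(f)$, hence $\rho(f_n)=\rho(f)=:\rho_0>0$, for all $n\ge n_0$; for such $n$ one also has $v_n<\rho_0$ (this is what one has in the situations where the lemma is applied). Picking $a\in k$ with $v_n\le|a|<\rho_0$, the element $f_n+a$ is nonzero with $\rho(f_n+a)=\rho_0$ and admits a compatible system of $p$-power roots, so \cref{norm-f-in} gives $\rho_0\in|k^\times|$. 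Moreover, for every $a\in k$ with $|a|>\rho_0$ the element $f_n+a$ automatically admits a compatible system of $p$-power roots, by iterating \cref{exists-p-th-root} with $g=a$ (note $|(f_n+a)-a|=\rho_0<|a|=|f_n+a|$ and $a^{1/p^e}\in k$ since $k$ is algebraically closed); together with the hypothesis for $|a|\ge v_n$ and $v_n<\rho_0$, we conclude that $f_n+a$ admits a compatible system of $p$-power roots for \emph{every} $a\in k$ with $|a|\ge\rho_0$.

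Next, fix $n\ge n_0$ and suppose, for contradiction, that $f_n\notin k$. Since $\rho$ is a multiplicative norm, $A$ is a Noetherian domain; hence for every $a\in k$ with $|a|\ge v_n$ the element $f_n+a$ is nonzero (as $-a\in k$) and therefore a unit of $A$, by \cref{p-power-root-to-unit}. On the other hand, by the maximum modulus principle for strictly $K_r$-affinoid algebras there is a maximal ideal $\m\subset A$ with $|f_n\bmod\m|=\rho_0$; write $L:=A/\m$, a finite extension field of $K_r$, and $b:=f_n\bmod\m\in L$, so $|b|_L=\rho_0\in|k^\times|$. For every $a\in k$ with $|a|\ge\rho_0$ the element $f_n-a$ admits a compatible system of $p$-power roots in $A$ (by the previous paragraph, since $\{f_n-a\}=\{f_n+a\}$ as $a$ runs over $k$), and its image is such a system for $b-a$ in $L$; thus \cref{f-in-k} applies with $r$ unchanged and this $L$, and yields $b\in k$. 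But then $f_n-b$ is a nonzero element of $A$ with $|{-b}|=\rho_0\ge v_n$, so $f_n-b$ is a unit of $A$, contradicting $f_n-b\in\m$. Hence $f_n\in k$ for all $n\ge n_0$, and since the structure map $k\to A$ is isometric and $k$ is complete, $k$ is closed in $A$, so $f=\lim f_n\in k$.

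The main point — and the only genuinely non-formal step — is the middle paragraph: using the maximum modulus principle to produce a maximal ideal whose residue field is finite over $K_r$ and at which $f_n$ realizes its spectral radius $\rho_0\in|k^\times|$, which is exactly the input needed to invoke \cref{f-in-k}, and then playing the resulting scalar $b=f_n\bmod\m\in k$ against the fact that $f_n+a$ is a unit for all $a\in k$ with $|a|\ge v_n$ (\cref{p-power-root-to-unit}). Steps 1 and the passage $f\in k$ are routine; the only hypothesis-level subtlety is that $v_n$ must be eventually smaller than $\rho(f)$ for the argument (and indeed for the statement) to hold, which is automatic in the intended applications.
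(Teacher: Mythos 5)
Your proof is correct and follows essentially the same route as the paper's: both reduce to \cref{f-in-k} via a residue field $A/\m$ finite over $K_r$, use \cref{norm-f-in} to place the relevant spectral radius in $|k^\times|$, and use \cref{p-power-root-to-unit} to play the scalar $f_n \bmod \m \in k$ against the unit-ness of $f_n - (f_n\bmod\m)$. Your two streamlinings---choosing $\m$ by the maximum modulus principle so that $|f_n \bmod \m| = \rho(f_n)$, and deducing $f \in k$ from the closedness of $k$ in $A$ rather than rerunning the argument for $f$ itself---are both valid, and the caveat you flag (that $v_n$ must eventually drop below $\rho(f)$, which is not literally among the stated hypotheses) is likewise used tacitly in the paper's own proof and does hold in the lemma's sole application.
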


\begin{proof}
Suppose that $f \neq 0$, and set $r := \rho(f) > 0$. Then there exists an integer $n_1 \geq 1$ such that $\rho(f - f_n) < r$ and $v_n < r$ for all $n \geq n_1$. In particular, we have $\rho(f_n) = r$ for all $n \geq n_1$.

We first prove that $f_n$ admits a compatible system of $p$-power roots for all $n \geq n_1$. 
Fix $n \geq n_1$, and take $a \in k$ such that $|a| = v_n$. 
If $f_n \in k$, the assertion is clear.
Thus, we may assume $f_n \notin k$.
Then $f_n + a$ admits a compatible system of $p$-power roots, and in particular, $f_n + a \in A^\times$ by \cref{p-power-root-to-unit}. Moreover, since
\[
\rho(f_n - (f_n + a)) = |a| = v_n < r = \rho(f_n),
\]
it follows from \cref{norm-f-in} that $f_n$ also admits a compatible system of $p$-power roots.

Next, we prove that $f + a$ admits a compatible system of $p$-power roots for every $a \in k$. If $f \in k$, the assertion is clear. Thus, we may assume $f \notin k$. Fix $a \in k$. Then $\rho(f + a) > 0$. Take $n \geq n_1$ such that $|a| > v_n$ or $a = 0$, and $\rho(f - f_n) < \rho(f + a)$. Then we have
\[
\rho(f + a - (f_n + a)) = \rho(f - f_n) < \rho(f + a).
\]
Since $f_n + a$ admits a compatible system of $p$-power roots, it follows from \cref{exists-p-th-root} that $f + a$ also admits such a system. In particular, $f \in A^\times$ by \cref{p-power-root-to-unit}.

Now take a maximal ideal $\mathfrak{m} \subseteq A$, and let $\bar{f}, \bar{f}_n \in L := A/\mathfrak{m}$ denote the images of $f, f_n$ in $L$ for each $n \geq 1$. Since $A$ is a strictly $K_r$-affinoid algebra, $L$ is a finite extension of $K_r$. 
Set $r' := |\bar{f}|_L>0$. Then there exists $n_0 \geq n_1$ such that $v_n < r'$ and $|\bar{f}_n - \bar{f}| < r'$ for all $n \geq n_0$. Note that $|\bar{f}_n| = r'$. 
Fix $n \geq n_0$, and let $a \in k$ with $|a| \geq r' > v_n$ or $a = 0$. Then $f_n - a$ admits a compatible system of $p$-power roots, and hence so does $\bar{f}_n - a$ in $L$. By \cref{norm-f-in,f-in-k}, we conclude that $\bar{f}_n \in k$, and in particular, there exists $a_n \in k$ such that $f - a_n \in \mathfrak{m}$. Furthermore, $|a_n| = |\bar{f}_n|_L = r' > v_n$, so $f_n = a_n$ by \cref{p-power-root-to-unit}. 
On the other hand, since $f - a$ admits a compatible system of $p$-power roots for every $a \in k$, it follows by the same argument that $f \in k$, as desired.
\end{proof}

\begin{theorem}\label{reduced-case}\textup{(cf.~\cref{main-thm})}
We use \cref{notation-non-triv}.
Let $\varphi \colon B \to A$ be a $k$-algebra homomorphism between $k$-affinoid algebras such that $A$ is reduced.
Then $\varphi$ is bounded.    
\end{theorem}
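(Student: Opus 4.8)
The plan is to reduce, in several steps, to a situation in which \cref{essential-lemma} applies, and then to finish by a short unit argument. We may assume $A\neq 0$.

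\emph{Reduction to a Tate algebra and to the spectral norm.} Choosing an admissible epimorphism $q\colon k\{r^{-1}T\}\twoheadrightarrow B$ from a generalized Tate algebra and replacing $\varphi$ by $\psi:=\varphi\circ q$, it suffices to show that every $k$-algebra homomorphism $\psi\colon k\{r^{-1}T\}\to A$ is bounded, because the quotient norm on $B$ is equivalent to $\|\cdot\|_B$ by Banach's open mapping theorem. Since $A$ is reduced, $\rho_A$ is a power-multiplicative norm equivalent to $\|\cdot\|_A$, so it is enough to prove
\[
\rho_A(\psi(h))\leq \|h\|_{k\{r^{-1}T\}}\qquad\text{for every }h\in k\{r^{-1}T\}.
\]

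\emph{Reduction to $k$ algebraically closed, $\rho_A$ multiplicative, and $A$ strictly affinoid.} Replacing $k$ by its completed algebraic closure $\widehat{k}^{\,a}$, $A$ by $(A\widehat{\otimes}_k\widehat{k}^{\,a})^{\mathrm{red}}$, and $\psi$ by the induced homomorphism, and using that base change to a complete field extension preserves spectral radii (and that, $A$ being reduced, $A$ embeds isometrically for spectral seminorms into this algebra), we may assume $k$ complete and algebraically closed; then $|k^\times|$ is a nontrivial divisible, hence dense, subgroup of $\R_{>0}$. Next, for each $x$ in the Shilov boundary $\Gamma(A)$ apply \cref{Shilov-boundary}~(2) to obtain a bounded $k$-algebra homomorphism $\psi_x\colon A\to A_x$ with $A_x$ reduced, $\rho_{A_x}$ multiplicative, and $\psi_x^*(\rho_{A_x})=x$; since $\rho_A(\psi(h))=\sup_{x\in\Gamma(A)}|\psi(h)(x)|=\sup_{x\in\Gamma(A)}\rho_{A_x}(\psi_x(\psi(h)))$, it is enough to bound each $\rho_{A_x}(\psi_x\circ\psi(h))$, so we may assume $\rho:=\rho_A$ multiplicative (hence $A$ a domain). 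Finally, choose a tuple $s$ of positive reals linearly independent over $|k^\times|^{\Q}$ such that $A\widehat{\otimes}_kK_s$ is strictly $K_s$-affinoid (cf.\ the proof of \cref{norm-f-in}); by \cref{A-to-AK} this base change is a spectral isometry, and being a Gauss-type extension it preserves both reducedness and multiplicativity of the spectral seminorm. Thus we are reduced to: $k$ complete and algebraically closed with $|k^\times|$ dense in $\R_{>0}$, $A$ a nonzero reduced strictly $K_s$-affinoid algebra with $\rho=\rho_A$ multiplicative, $\psi\colon k\{r^{-1}T\}\to A$ a $k$-algebra homomorphism, and the goal $\rho(\psi(h))\leq\|h\|$ for all $h\in k\{r^{-1}T\}$.

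\emph{The core argument.} Suppose not: $\rho(\psi(h))>\|h\|=:w$ for some $h$, and put $g:=\psi(h)$, so $g\neq 0$. For every $a\in k$ with $|a|>w$, \cref{reduce-to-alg-cl} (with $\overline{k}=k$) gives a compatible system of $p$-power roots of $h+a$ in $k\{r^{-1}T\}$; applying the ring homomorphism $\psi$ produces a compatible system of $p$-power roots of $\psi(h+a)=g+a$ in $A$. Pick $v\in|k^\times|$ with $w<v<\rho(g)$, which is possible since $|k^\times|$ is dense. Applying \cref{essential-lemma} to the constant sequence $f_n:=g$ with $v_n:=v$ — the limit $g\neq 0$ exists, and $g+a$ admits a compatible $p$-power system whenever $|a|\geq v>w$ — we obtain $g\in k$. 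Hence $g\in k^\times$ and $\rho(g)=|g|$, so $|g|>w=\|h\|$; but then $h-g=-g(1-g^{-1}h)$ is a unit of $k\{r^{-1}T\}$ (as $\|g^{-1}h\|<1$), so $0=g-g=\psi(h)-g=\psi(h-g)$ would be a unit of $A$, forcing $A=0$, a contradiction. Therefore $\rho(\psi(h))\leq\|h\|$ for all $h$, and unwinding the reductions shows that $\psi$, and hence $\varphi$, is bounded.

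\emph{Main obstacle.} The genuinely hard input — converting the spectral norm into the existence of compatible systems of $p$-power roots — is already isolated in \cref{essential-lemma} (together with \cref{reduce-to-alg-cl,norm-f-in}), so in the present argument the delicate part is the chain of reductions: verifying that the passages to $\widehat{k}^{\,a}$ and then to $A\widehat{\otimes}_kK_s$ preserve everything needed to invoke \cref{essential-lemma} (reducedness, multiplicativity of $\rho$, strict affinoidness, the spectral isometries), and that the Shilov-boundary reduction is legitimate — i.e.\ that one proves the reduced, spectrally-multiplicative case first and then feeds it back through \cref{Shilov-boundary}~(2). The only other nontrivial ingredient used tacitly is the classical equivalence of $\|\cdot\|_A$ with $\rho_A$ for reduced affinoid $A$, which turns the spectral estimate into boundedness for the Banach norms.
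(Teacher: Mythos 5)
Your strategy is genuinely different from the paper's: you aim for the pointwise spectral estimate $\rho_A(\psi(h))\leq\|h\|$ and derive a contradiction from a single element $h$, whereas the paper invokes the closed graph theorem and works with a sequence $g_n\to 0$. That difference is exactly where the gap lies. In the paper's setup the sequence $\{v_n\}$ fed into \cref{essential-lemma} satisfies $v_n\to 0$ (because $\|g_n\|\to 0$), and the proof of \cref{essential-lemma} uses this in two essential places: in the step ``we prove that $f+a$ admits a compatible system of $p$-power roots for \emph{every} $a\in k$'' one must choose $n$ with $|a|>v_n$, and later one must choose $n_0$ with $v_n<r'=|\bar f|_L$. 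You apply the lemma to the constant sequence $f_n:=g$ with the \emph{constant} threshold $v_n:=v$, where $v>w=\|h\|>0$ is bounded away from zero; neither of those two steps is then available, so the lemma's proof does not cover your application. (The lemma's statement does not literally require $v_n\to 0$, but read that way it is false: take $A=K_s$, $f_n=f=T$, and $v_n$ a constant larger than $s$; the hypotheses hold by \cref{reduce-to-alg-cl}, yet $T\notin k$. So the statement must be read with $v_n\to 0$, or at least $v_n<\rho(f)$ eventually, and its proof rechecked in any new regime.) You cannot repair this by shrinking $v$, because \cref{reduce-to-alg-cl} only produces compatible systems of $h+a$ for $|a|>\|h\|$, and $\|h\|$ is a fixed positive number in your argument — this is precisely the obstruction the closed graph theorem is there to remove.

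The gap is fillable, but it requires an argument you do not give. In your reduced setting ($\rho$ multiplicative, $A$ a Noetherian domain), once one shows as in the first paragraph of the proof of \cref{essential-lemma} that $g$ itself admits a compatible system (take $|a|=v<\rho(g)=\rho(g+a)$ and apply \cref{exists-p-th-root}), \cref{p-power-root-to-unit} gives $g\in A^\times$, and then multiplicativity forces $|\bar g|_L\cdot|\overline{g^{-1}}|_L=1\leq\rho(g)\rho(g^{-1})=1$, hence $|\bar g|_L=\rho(g)>v>w$ for \emph{every} maximal ideal. With that equality in hand, \cref{norm-f-in} and \cref{f-in-k} apply to $\bar g$ (you only need compatible systems of $\bar g-a$ for $|a|\geq|\bar g|_L>w$, which you do have), and the endgame $g\in k$, followed by your unit argument with $h-g$, goes through. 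So the proposal can be made to work, but as written the invocation of \cref{essential-lemma} is unjustified; you should either prove this constant-sequence variant directly (via the $|\bar g|_L=\rho(g)$ observation) or follow the paper's route, where the closed graph theorem supplies $v_n\to 0$ and, since the paper does not pass to $\widehat{k^a}$ up front, an additional descent step (\cref{cl1:reduced-case} plus the closedness of $\ker\varphi_x$) is needed to return from $\bar k$ to $k$.
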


\begin{proof}
By \cite{Temkin10}*{Fact~3.1.2.1~(iii)}, we may assume that the norm on $A$ coincides with $\rho_A$.
Take an admissible surjection $k\{s^{-1}S\} \to B$ for some $s \in \mathbb{R}_{> 0}^m$. 
If the composition $k\{s^{-1}S\} \to B \to A$ is bounded, then so is $\varphi$, and we may assume $B = k\{s^{-1}S\}$ without loss of generality.
Let $r = (r_1,\ldots,r_q)$ be a tuple of positive real numbers that are $\mathbb{Q}$-linearly independent over $|k^\times|^\mathbb{Q}$ such that $A \widehat{\otimes}_k K_r$ is a strictly $K_r$-affinoid algebra.
Fix a sequence $\{g_n \in B\}_{n \geq 1}$ such that $\lim_{n \to \infty} g_n = 0$ and that $f := \lim_{n \to \infty} \varphi(g_n)$ exists. By the closed graph theorem (\cite{BGR}*{Section~2.8.1}), it suffices to show that $f = 0$.
By \cref{A-to-AK}, the spectral radius of $A \widehat{\otimes}_k K_r$ is a norm, thus $A \widehat{\otimes}_k K_r$ is reduced.
Since the map $A \to A \widehat{\otimes}_k K_r$ is injective by \cite{Gruson}*{Section~3,~Theorem~1(4)}, we may assume that $A$ is a strictly $K_r$-affinoid algebra.
Set $f_n := \varphi(g_n)$, and take a sequence $\{v_n \in |k^\times|\}_{n \geq 1}$ such that $\|g_n\| < v_n$ and $\lim_{n \to \infty} v_n = 0$.
By \cref{reduce-to-alg-cl}, for every $a \in \bar{k}$ and $n \geq 1$ such that $|a|_{\bar{k}} \geq v_n$, the element $g_n + a$ admits a compatible system of $p$-power roots in $B \otimes_k \bar{k}$. Since $\varphi$ is a $k$-algebra homomorphism, the same holds for $f_n + a$ in $A \otimes_k \bar{k}$.
In particular, $f_n + a$ admits such a system in $A' := A \widehat{\otimes}_k \bar{k}$. Note that $A'$ is a strictly $K_r' := K_r \widehat{\otimes}_k \bar{k}$-affinoid algebra, and we have
\[
K_r' \simeq \bar{k}\{r^{-1}T, rT^{-1}\}.
\]
Suppose for contradiction that $f \neq 0$.
For a point $x$ in the Shilov boundary of $A'$, there exists a bounded $K_r'$-algebra homomorphism $\psi_x \colon A' \to A'_x$ to a strictly $K_r'$-affinoid algebra $A'_x$ such that $A'_x$ is reduced, $\rho_{A'_x}$ is multiplicative, and $\psi_x^*(\rho_{A'_x}) = x$.
Then there exists $x \in \Gamma(A')$ such that $\psi_x(f) \neq 0$ by \cref{Shilov-boundary}. Then $\psi_x(f)$, $\{\psi_x(f_n)\}$, and $\{v_n\}$ satisfy the assumptions of \cref{essential-lemma}, so there exists an integer $n_x \geq 1$ such that $\psi_x(f), \psi_x(f_n) \in \bar{k}$ for all $n \geq n_x$.
Set $a := \psi_x(f)$ and $a_n := \psi_x(f_n)$ for $n \geq n_x$.
We claim the following:
\begin{claim}\label{cl1:reduced-case}
The field extension $k \subseteq k' := k(a, a_n \mid n \geq n_x)$ is finite.
\end{claim}

\begin{proof}[Proof of \cref{cl1:reduced-case}]
Take a maximal ideal $\mathfrak{m} \subseteq A$ and set $L := A / \mathfrak{m}$. Then $k \subseteq k'$ is a subextension of $k \subseteq L$. Since $K_r \subseteq L$ is finite, there exist finitely many elements $b_1, \ldots, b_l \in \{a, a_n \mid n \geq n_x\}$ such that $k' \subseteq K_r(b_1, \ldots, b_l)$.
Then for each $n \geq n_x$, there exists a polynomial $G_n(X_1,\ldots,X_l) \in K_r[X_1, \ldots, X_l]$ such that
\[
a_n = G_n(b_1, \ldots, b_l) \in K_r(b_1, \ldots, b_l) \simeq k(b_1, \ldots, b_l)\{r^{-1}T, rT^{-1}\}.
\]
Comparing the degree zero parts with respect to $T$, we see that $a_n \in k(b_1, \ldots, b_l)$. By the same reasoning, we also obtain $a \in k(b_1, \ldots, b_l)$.
Hence, $k \subseteq k' = k(b_1, \ldots, b_l)$ is a finite extension, as claimed.
\end{proof}
Now consider the $k'$-module homomorphism
\[
\varphi_x \colon B \otimes_k k' \xrightarrow{\varphi_{k'}} A \otimes_k k' \to A'_x.
\]
Since $a_n = \psi_x(f_n)$ and $a = \psi_x(f)$, we have $g_n - a_n \in \ker(\varphi_x)$ for all $n \geq n_x$.
As $k'$ is finite over $k$ by \cref{cl1:reduced-case}, $B \otimes_k k'$ is a strictly $k$-affinoid algebra, and in particular, $\ker(\varphi_x)$ is closed.
Hence, we obtain
\[
\lim_{n \to \infty} (g_n - a_n) = -a \in \ker(\varphi_x).
\]
Thus, $\psi_x(f) = a = 0$, contradicting the choice of $x$ such that $\psi_x(f) \neq 0$.
We conclude that $f = 0$, as desired.
\end{proof}

\subsection{The general case}
In this subsection, we apply the result of Subsection~2.2 (\cref{reduced-case}) to interpret \cref{main-thm} as a kind of deformation problem. This perspective enables us to relate the boundedness of $k$-algebra homomorphisms to derivations over a polynomial rings, thereby connecting it to the algebraic properties of the base field $k$.

\begin{lemma}\label{deformation-prob}
Let $k$ be a complete non-archimedean valued field, and let $r = (r_1, \ldots, r_n) \in \mathbb{R}_{>0}^n$.
Let $A$ be a reduced $k$-affinoid algebra, and let $\psi \colon k\{r^{-1}T\} \to A$ be a bounded $k$-algebra homomorphism.
Then the following conditions are equivalent:
\begin{enumerate}
    \item For any $k$-algebra homomorphism $\varphi \colon k\{r^{-1}T\} \to A'$ to a $k$-affinoid algebra $A'$ such that there exists a bounded isomorphism $(A')^{\mathrm{red}} \xrightarrow{\sim} A$ making the following diagram commute:
    \[
    \psi = \left( k\{r^{-1}T\} \xrightarrow{\varphi} A' \twoheadrightarrow (A')^{\mathrm{red}} \xrightarrow{\sim} A \right),
    \]
    the map $\varphi$ is bounded.
    
    \item For every ideal $J \subseteq A$, we have
    \[
    \Hom_{k\{r^{-1}T\}}\left(\Omega_{k\{r^{-1}T\}/k[T]}, A/J\right) = 0,
    \]
    where $A/J$ is regarded as a $k\{r^{-1}T\}$-algebra via the composition
    \[
    k\{r^{-1}T\} \xrightarrow{\psi} A \to A/J.
    \]
\end{enumerate}
\end{lemma}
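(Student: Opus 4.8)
The plan is to compare $\varphi$ with a carefully chosen bounded lift of $\psi$ to $A'$: the difference will be a nilpotent‑valued derivation, and by \emph{forcing the lift to agree with $\varphi$ on the coordinate functions} $T_1,\dots,T_n$ I can make this difference kill $T_1,\dots,T_n$, so that it is controlled by the $k[T]$-linear derivations $\mathrm{Der}_{k[T]}(k\{r^{-1}T\},-)=\Hom_{k\{r^{-1}T\}}(\Omega_{k\{r^{-1}T\}/k[T]},-)$ appearing in (2) rather than by $k$-linear ones. I would first record three facts. (a) The spectral seminorm $\rho_{A'}$ factors through $(A')^{\mathrm{red}}\cong A$, so $\rho_{A'}(\varphi(T_i))=\rho_A(\psi(T_i))\le r_i$ for any such $A'$ and any lift $\varphi$ of $\psi$. (b) The universal property of Tate algebras (\cite{BGR}): elements $b_i$ of a $k$-affinoid algebra with $\rho(b_i)\le r_i$ determine a bounded $k$-algebra homomorphism $k\{r^{-1}T\}\to A'$ sending $T_i\mapsto b_i$. (c) Since $k[T]$ is dense in $k\{r^{-1}T\}$ and finite modules over affinoid algebras are Hausdorff Banach modules, every bounded $k[T]$-linear derivation $k\{r^{-1}T\}\to M$ is zero; thus $\mathrm{Der}_{k[T]}(k\{r^{-1}T\},M)=0$ means precisely that every non‑zero $k[T]$-linear derivation into $M$ is unbounded. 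I would also note that, by left exactness of $\Hom_{k\{r^{-1}T\}}(\Omega_{k\{r^{-1}T\}/k[T]},-)$ together with a prime filtration, (2) implies $\mathrm{Der}_{k[T]}(k\{r^{-1}T\},M)=0$ for \emph{every} finite $A$-module $M$.

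For $(2)\Rightarrow(1)$: given $\varphi\colon k\{r^{-1}T\}\to A'$ as in (1), set $N:=\mathrm{nil}(A')$; then $A'/N\cong A$ and $N^m=0$ for some $m$ since $A'$ is Noetherian, and I would induct on $m$. The case $m=1$ is just that $\psi$ is bounded. For $m\ge2$, the inductive hypothesis applied to $A'/N^{m-1}$ — whose nilradical has index $\le m-1$ and whose reduction is still $A$ — reduces us to the case of a square‑zero extension $A'\twoheadrightarrow B$ whose kernel $M$ is a finite $A$-module (here $A=B^{\mathrm{red}}$) together with a lift $\varphi$ of a bounded homomorphism $\overline\varphi\colon k\{r^{-1}T\}\to B$. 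Using (a) and (b) I choose a bounded $\widetilde\varphi\colon k\{r^{-1}T\}\to A'$ with $\widetilde\varphi(T_i)=\varphi(T_i)$; it lifts $\overline\varphi$ because $\widetilde\varphi$ and $\overline\varphi$ are bounded and agree on the dense subring $k[T]$. Since $M^2=0$, the difference $D:=\varphi-\widetilde\varphi$ is a $k$-linear derivation $k\{r^{-1}T\}\to M$ (with $M$ a $k\{r^{-1}T\}$-module via $\psi$) satisfying $D(T_i)=0$, hence a $k[T]$-linear derivation, hence $D=0$ by (c) and the finiteness remark. Therefore $\varphi=\widetilde\varphi$ is bounded.

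For $(1)\Rightarrow(2)$ I argue by contraposition. Suppose there is an ideal $J\subseteq A$ and a non‑zero $D\in\mathrm{Der}_{k[T]}(k\{r^{-1}T\},A/J)$, and form the square‑zero extension $A':=A\oplus(A/J)$; it is $k$-affinoid, being finite over $A$, and $(A')^{\mathrm{red}}=A$ because $A$ is reduced. Since $D$ is a $k$-derivation, $\varphi\colon f\mapsto(\psi(f),D(f))$ is a $k$-algebra homomorphism $k\{r^{-1}T\}\to A'$ whose composition with $A'\to(A')^{\mathrm{red}}=A$ equals $\psi$; but its second coordinate is the non‑zero $k[T]$-linear derivation $D$, which is unbounded by (c), so $\varphi$ is unbounded. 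This contradicts (1).

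I expect the one genuinely non‑formal ingredient to be arranging the bounded lift $\widetilde\varphi$ to match $\varphi$ on the coordinates $T_i$: this is exactly what turns $\varphi-\widetilde\varphi$ from a $k$-linear derivation into a $k[T]$-linear one and thereby activates hypothesis (2), and it rests on fact (b) — that $\rho_{A'}(b)\le r_i$ (equality allowed) already suffices to extend $T_i\mapsto b$ to a bounded homomorphism out of the Tate algebra. The remaining ingredients — the dévissage along powers of the nilradical and the passage from the cyclic quotients $A/J$ to arbitrary finite $A$-modules by left exactness — are routine.
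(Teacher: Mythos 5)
Your proposal is correct and follows essentially the same route as the paper: for $(1)\Rightarrow(2)$ the trivial square-zero extension $A\oplus A/J$ with $f\mapsto(\psi(f),\delta(f))$, and for $(2)\Rightarrow(1)$ a bounded lift agreeing with $\varphi$ on the $T_i$ (via $\rho_{A'}(\varphi(T_i))=\rho_A(\psi(T_i))\le r_i$ and the universal property of $k\{r^{-1}T\}$), so that the difference becomes a $k[T]$-linear derivation killed by dévissage through the nilradical. The only cosmetic difference is that the paper filters the nilradical directly into cyclic quotients $A/J_m$, whereas you reduce from finite $A$-modules to cyclic ones by left exactness of $\Hom(\Omega_{k\{r^{-1}T\}/k[T]},-)$; these are the same reduction.
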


\begin{proof}
We first prove the implication (1) $\Rightarrow$ (2) by contrapositive.
Assume there exists an ideal $J \subseteq A$ and a non-zero derivation
\[
0 \neq \delta \in \mathrm{Der}_{k[T]}\left(k\{r^{-1}T\}, A/J\right) = \Hom_{k\{r^{-1}T\}}\left(\Omega_{k\{r^{-1}T\}/k[T]}, A/J\right).
\]
Define a seminormed $A$-module $(A',\|\cdot\|_{A'}) := (A, \rho_A) \oplus^{\max} (A/J, \|\cdot\|_q)$, where $\|\cdot\|_q$ is the quotient seminorm induced by the natural surjection $\pi \colon A \to A/J$. We endow $A'$ with the following ring structure:
\[
(a,b) \cdot (a',b') := (aa', \pi(a)b' + \pi(a')b).
\]

\begin{claim}\label{cl:const-sn-ring}
The pair $(A',\|\cdot\|_{A'})$ satisfies the following properties:
\begin{enumerate}
    \item $(A',\|\cdot\|_{A'})$ is a seminormed ring.
    \item The map $(A, \rho_A) \to (A', \|\cdot\|_{A'})$ is a seminormed finite ring homomorphism.
    \item The induced map $A \to (A')^{\mathrm{red}}$ is a bounded isomorphism.
\end{enumerate}
\end{claim}

\begin{proof}[Proof of \cref{cl:const-sn-ring}]
(1) follows from the computation:
\begin{align*}
\|(a,b)\cdot (a',b')\|
&= \max\{\rho_A(aa'), \|\pi(a)b' + \pi(a')b\|_q\} \\
&\leq \max\{\rho_A(a)\rho_A(a'), \rho_A(a)\|b'\|_q, \rho_A(a')\|b\|_q\} \\
&\leq \max\{\rho_A(a), \|b\|_q\} \cdot \max\{\rho_A(a'), \|b'\|_q\} \\
&= \|(a,b)\| \cdot \|(a',b')\|.
\end{align*}
(2) follows from the fact that the $A$-module map
\[
A \oplus^{\max} A \to A \oplus^{\max} A/J,\quad (a,b) \mapsto (a,\pi(b))
\]
is an admissible surjection.
Finally, we prove assertion (3).
The inclusion $ p \colon A \to A'$ given by $a \mapsto (a,0)$ induces a ring homomorphism $p^{\mathrm{red}} \colon A \to (A')^{\mathrm{red}}$.
This is an isomorphism since $A$ is reduced and $(0,b)^2 = 0$ for all $b \in A/J$.
Moreover, since $p$ is an isometry, $p^{\mathrm{red}}$ is a bounded isomorphism.
\end{proof}

Now define a map
\[
\varphi \colon k\{r^{-1}T\} \to A',\quad f \mapsto (\psi(f), \delta(f)),
\]
then $\varphi$ is $k$-algebra homomorphism since $\delta$ is a derivation over $k$.
We claim that $\varphi$ is not bounded.
To see this, define the bounded homomorphism
\[
\varphi' \colon k\{r^{-1}T\} \xrightarrow{\psi} A \xrightarrow{p} A'.
\]
Then $\varphi|_{k[T]} = \varphi'|_{k[T]}$ but $\varphi \neq \varphi'$ since $\delta$ is a non-zero derivation over $k[T]$.
Since $k[T]$ is dense in $k\{r^{-1}T\}$, we conclude that $\varphi$ is not bounded.

Next, we prove the implication (2) $\Rightarrow$ (1).
Let $\pi_0 \colon A' \twoheadrightarrow (A')^{\mathrm{red}} \xrightarrow{\sim} A$ be the given bounded $k$-algebra homomorphism.
Let $\varphi \colon k\{r^{-1}T\} \to A'$ be a $k$-algebra homomorphism such that $\pi_0 \circ \varphi = \psi$.
For $1 \leq i \leq n$, set $f_i := \varphi(T_i) \in A'$.
Since $\rho_{A'}(f_i) = \rho_A(\psi(T_i)) \leq r_i$, there exists a bounded $k$-algebra homomorphism
\[
\varphi' \colon k\{r^{-1}T\} \to A'
\]
with $\varphi'(T_i) = f_i$ for all $i$ (by \cite{Temkin10}*{Exercise~3.1.2.5}).
Then $\varphi|_{k[T]} = \varphi'|_{k[T]}$ and thus
\[
\psi|_{k[T]} = \pi_0 \circ \varphi|_{k[T]} = \pi_0 \circ \varphi'|_{k[T]}.
\]
Since both $\psi$ and $\pi_0 \circ \varphi'$ are bounded and $k[T]$ is dense in $k\{r^{-1}T\}$, we get $\psi = \pi_0 \circ \varphi'$.

\begin{claim}\label{cl:deformation}
There exists a filtration of ideals of $A'$
\[
0 = I_0 \subseteq I_1 \subseteq \cdots \subseteq I_l = \sqrt{(0)_{A'}}
\]
such that for each $0 \leq m < l$, there exists an ideal $J_m \subsetneq A$ with $I_{m+1}/I_m \simeq A/J_m$.
\end{claim}

\begin{proof}[Proof of \cref{cl:deformation}]
Let $I := \sqrt{(0)_{A'}}$, so $I^s = 0$ for some integer $s \geq 1$.
Each $I^j/I^{j+1}$ is a finite $A'/I \simeq A$-module.
Then for each $j$, there is a filtration
\[
0 = M_0^{(j)} \subseteq \cdots \subseteq M_{t_j}^{(j)} = I^j/I^{j+1}
\]
with $M_{i+1}^{(j)}/M_i^{(j)} \simeq A/J_i^{(j)}$ for some ideals $J_i^{(j)} \subseteq A$.
Lift each $M_i^{(j)}$ to an ideal $I_i^{(j)} \subseteq A'$ such that $I^{j+1} \subseteq I_i^{(j)} \subseteq I^j$ and $I_i^{(j)}/I^{j+1} = M_i^{(j)}$.
Then the sequence
\[
0=I^{(s-1)}_0 \subseteq I^{(s-1)}_1 \subseteq \cdots \subseteq I^{(s-1)}_{t_{s}}=I^{s-1} \subseteq I^{(s-2)}_1 \subseteq \cdots \subseteq I^{(0)}_{t_0-1} \subseteq  I^{(0)}_{t_0}=I
\]
is the desired sequence.
\end{proof}

Apply \cref{cl:deformation} to get $I_0, \ldots, I_l$ and $J_0, \ldots, J_{l-1}$.
Set $\pi_m \colon A' \to A'_m := A'/I_m$ and define $\varphi_m := \pi_m \circ \varphi$, $\varphi'_m := \pi_m \circ \varphi'$.
We claim $\varphi_m = \varphi'_m$ for all $m$, by downward induction on $m$.
For $m = l$, $\varphi_l = \varphi'_l = \psi$.
Assume $\varphi_{m+1} = \varphi'_{m+1}$.
Then $\varphi_m - \varphi'_m$ induces a derivation $\delta \colon k\{r^{-1}T\} \to A/J_m$ by the exact sequence
\[
0 \to A/J_m \to A_m \to A_{m+1} \to 0.
\]
Since $\varphi_m$ and $\varphi'_m$ agree on $k[T]$, $\delta$ is a derivation over $k[T]$.
By assumption (2), $\delta = 0$, hence $\varphi_m = \varphi'_m$.
In conclusion, $\varphi = \varphi_0 = \varphi'_0 = \varphi'$, and thus $\varphi$ is bounded.
\end{proof}

\begin{lemma}\label{computation-derivation}
Let $k$ be a complete non-archimedean valued field of characteristic $p \geq 0$.
Let $r \in (0,1) \setminus |k^\times|^\mathbb{Q}$. Then if either $p=0$, or $p>0$ and $[k:k^p] = \infty$, we have
\[
\Hom_{k\{r^{-1}T\}}\left(\Omega_{k\{r^{-1}T\}/k[T]},K_r\right) \neq 0,
\]
$k^p:=\{a^p \mid a\in k\}$.
\end{lemma}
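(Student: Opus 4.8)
The plan is to reduce the statement to a question about Kähler differentials of fields and then to exhibit one explicit witness of non-vanishing. Write $A := k\{r^{-1}T\}$ and $L := \mathrm{Frac}(A)$; since $K_r$ is a field containing $A$, we have $L \subseteq K_r$. A non-zero $k(T)$-derivation $D \colon L \to L$ restricts to a $k[T]$-derivation $D|_A \colon A \to L \hookrightarrow K_r$, and $D|_A \neq 0$ because a derivation of $L$ vanishing on $A$ vanishes on all of $L$ by the quotient rule; thus $D|_A$ is a non-zero element of $\mathrm{Der}_{k[T]}(A, K_r) = \Hom_A(\Omega_{A/k[T]}, K_r)$. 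As $L$ is a field, such a $D$ exists precisely when $\Omega_{L/k(T)} \neq 0$, so it suffices to produce $\theta \in A$ with $d\theta \neq 0$ in $\Omega_{L/k(T)}$.

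When $p = 0$ I would take $\theta := \log(1+T) = \sum_{n\ge1}(-1)^{n+1}T^n/n$. Using that $|n|_k \ge n^{-c}$ for a constant $c \ge 0$ depending only on the residue characteristic of $k$, one gets $|1/n|_k\, r^n \to 0$, so $\theta \in A$. In characteristic zero, $d\theta \neq 0$ in $\Omega_{L/k(T)}$ is equivalent to $\theta$ being transcendental over $k(T)$, which I would check directly: a monic algebraic relation for $\theta$ over $k(T)$ of minimal degree $d$, differentiated with respect to $T$ (using $\theta' = (1+T)^{-1}$) and reduced modulo the relation, forces its subleading coefficient $c_{d-1}(T) \in k(T)$ to satisfy $c_{d-1}' = -d/(1+T)$, which is impossible since $-d/(1+T)$ has a non-zero residue at $T = -1$ and hence no antiderivative in $k(T)$.

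When $p > 0$ and $[k:k^p] = \infty$, I would fix a countably infinite $p$-independent subset $\{\lambda_i\}_{i\ge1}$ of $k$ (multiplying each $\lambda_i$ by a suitable $p$-th power to arrange $|\lambda_i| \le 1$, which preserves $p$-independence) and set $\theta := \sum_{i\ge1}\lambda_i T^{pi}$, which converges in $A$ since $r < 1$. In characteristic $p$ one has $\ker\bigl(d\colon L \to \Omega_{L/k(T)}\bigr) = k(T)\cdot L^p = \mathrm{Frac}(R)$, where $R := k[T][A^p]$ is the subring of $A$ generated by $k$, $T$ and $A^p$ (using $\mathrm{Frac}(A^p) = L^p$), so it suffices to show $\theta \notin \mathrm{Frac}(R)$. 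Writing $R_0 := k\cdot A^p$ and letting $K_r^{(p)} \subseteq K_r$ be the subfield of Laurent series in $T^p$, one has $T^p \in R_0$, $\mathrm{Frac}(R_0) \subseteq K_r^{(p)}$, and inside the free $K_r^{(p)}$-module $K_r = \bigoplus_{s=0}^{p-1}T^s K_r^{(p)}$ the subfield $\mathrm{Frac}(R)$ equals $\bigoplus_{s=0}^{p-1}T^s\,\mathrm{Frac}(R_0)$; comparing the components in $T^p$ and using $\theta \in K_r^{(p)}$ reduces the claim to $\theta \notin \mathrm{Frac}(R_0)$. Since every element of $R_0$ is a power series in $T^p$ whose coefficients span a finite-dimensional $k^p$-subspace of $k$, a relation $\theta = g/h$ with $g, h \in R_0$ would have all coefficients of $g$ and $h$ in one such subspace $V$; extracting from $\theta h = g$ the coefficient of $T^{p(i + \mathrm{ord}\,h)}$ and inducting on $i$ would show that every $\lambda_i$ lies in the $k^p$-subalgebra of $k$ generated by a fixed finite set, hence in a fixed finite extension of $k^p$ — contradicting the $p$-independence of the infinitely many $\lambda_i$.

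I expect the main obstacle to be the final step in the positive-characteristic case: the coefficient bookkeeping proving $\theta \notin \mathrm{Frac}(R_0)$, i.e. that a convergent power series in $T^p$ with ``$p$-independent'' coefficients cannot be a ratio of power series whose coefficients span a finite-dimensional $k^p$-space. The auxiliary identifications $\mathrm{Frac}(A^p) = L^p$ and $\mathrm{Frac}(R) = \bigoplus_s T^s\,\mathrm{Frac}(R_0)$, together with the reduction from $R$ to $R_0$, also require some care, but they become routine once one works inside $K_r$ with its $K_r^{(p)}$-module structure; the characteristic-zero case is then entirely routine given the transcendence of $\log(1+T)$.
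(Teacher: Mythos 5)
Your proof is correct, and its overall skeleton matches the paper's: both pass to $K := \mathrm{Frac}(k\{r^{-1}T\})$, invoke the MacLane/Matsumura criterion ($dx=0$ in $\Omega_{K/k(T)}$ iff $x$ is algebraic over $k(T)$ in characteristic $0$, iff $x\in k(T)\cdot K^p$ in characteristic $p$), and then exhibit an explicit convergent series violating it. The difference lies in the witnesses and their verification. In characteristic $0$ the paper takes a lacunary $0$--$1$ series with gaps $i_{m+1}=m(1+i_m)+1$ and kills any integral dependence relation by comparing supports and degrees; you take $\log(1+T)$ and run the classical residue argument (the subleading coefficient would need an antiderivative of $-d/(1+T)$ in $k(T)$), which is shorter but needs the small extra estimate $|1/n|_k\le n^{c}$ (valid since $|\cdot|_k$ restricted to $\mathbb{Q}$ is trivial or $\ell$-adic) to get convergence for $r<1$, whereas the paper's witness converges trivially. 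In characteristic $p$ the two arguments are close in spirit, but your choice of exponents $T^{pi}$ places $\theta$ in the subfield $K_r^{(p)}$ of series in $T^p$, so after splitting $K_r=\bigoplus_{s=0}^{p-1}T^sK_r^{(p)}$ you only have to exclude $\theta$ from $\mathrm{Frac}(k\cdot A^p)$, and the triangular recursion $\lambda_i h_{i_0}=g_{i+i_0}-\sum_{j<i}\lambda_j h_{i+i_0-j}$ traps every $\lambda_i$ in one finite extension of $k^p$ (each element of $k$ being purely inseparable of degree $\le p$ over $k^p$), contradicting $p$-independence; the paper's witness $\sum x_{\lambda_i}T^i$ mixes the $T$-congruence classes and therefore needs the slightly more delicate bookkeeping with the finite set $\Lambda'$ and an index $\lambda_j\notin\Lambda'$. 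Both routes are sound; yours is somewhat cleaner in characteristic $p$ and more classical, at the cost of a mild residue-characteristic case analysis, in characteristic $0$.
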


\begin{proof}
We first consider the case where $p=0$.
Define a sequence $\{i_m\}_{m \geq 1} \subset \mathbb{Z}_{>0}$ by $i_1=2$ and $i_{m+1}=m(1+i_m)+1$.
Define
\[
f := \sum a_i T^i \in k\{r^{-1}T\}, \quad \text{where } a_i =
\begin{cases}
    1 & \text{if } i = i_m \text{ for some } m \geq 1, \\
    0 & \text{otherwise}.
\end{cases}
\]

\begin{claim}\label{cl:omega-charazero}
The element $f$ is not integral over $k(T)$.
\end{claim}

\begin{proof}[Proof of \cref{cl:omega-charazero}]
Suppose for contradiction that $f$ is integral over $k(T)$, i.e., there exists a monic polynomial 
\[
P(X) = X^n + h_1' X^{n-1} + \cdots + h_n' \in k(T)[X]
\]
such that $P(f) = 0$.
Then there exists $h_0' \in k[T] \setminus \{0\}$ such that $h_i := h_i' h_0' \in k[T]$ for all $i$ and
\begin{equation}\label{eq:int-f}
  h_0 f^n + h_1 f^{n-1} + \cdots + h_n = 0.
\end{equation}
Let $m := \max\{n, \deg h_0, \ldots, \deg h_n\}$, and decompose $f$ as $f = f_1 + f_2$ where
\[
f_1 := \sum_{i \leq i_m} a_i T^i, \qquad f_2 := \sum_{i > i_m} a_i T^i.
\]
Substituting the decomposition $f = f_1 + f_2$ into \eqref{eq:int-f}, we obtain
\begin{equation}\label{eq:int-f-2}
    h_0 f_1^n + h_1 f_1^{n-1} + \cdots + h_n + f_2 h = 0
\end{equation}
for some $h \in k\{r^{-1}T\}$.
Define the set $I(g)$ for $g = \sum b_i T^i \in k\{r^{-1}T\}$ by
\[
I(g) := \{i \in \mathbb{Z}_{\geq 0} \mid b_i \neq 0\}.
\]
Then $I(f_2 h) \subseteq \mathbb{Z}_{\geq i_{m+1}}$ and
\[
\max I(h_0 f_1^n + \cdots + h_n) \leq m + n i_m < i_{m+1},
\]
so they are disjoint, and hence we have
\[
h_0 f_1^n + \cdots + h_n = 0
\]
by \eqref{eq:int-f-2}.
However, note that
\[
\deg(h_0 f_1^n) = \deg(h_0) + n \deg(f_1) = \deg(h_0) + n i_m \geq  n i_m,
\]
while the degrees of the other terms satisfy
\[
 \deg(h_1 f_1^{n-1} + \cdots + h_n) \leq m + (n-1)i_m < n i_m.
\]
This contradicts \eqref{eq:int-f-2}. Hence, $f$ is not integral over $k(T)$.
\end{proof}

By \cref{cl:omega-charazero} and \cite{Matsumura}*{Theorem~26.5}, the differential $df \in \Omega_{K/k(T)}$ is non-zero, where $K := \mathrm{Frac}(k\{r^{-1}T\})$.
Hence, there exists a $K$-linear map $\delta' \colon \Omega_{K/k(T)} \to K_r$ such that $\delta'(df) = 1$.
Composing with the natural map yields
\[
\delta \colon \Omega_{k\{r^{-1}T\}/k[T]} \to \Omega_{K/k(T)} \xrightarrow{\delta'} K_r,
\]
which satisfies $\delta(df) = 1$.
Thus, we obtain
\[
\Hom_{k\{r^{-1}T\}}\left(\Omega_{k\{r^{-1}T\}/k[T]}, K_r\right) \neq 0.
\]

Now suppose $p > 0$ and $[k:k^p] = \infty$.
Let $\{x_\lambda\}_{\lambda \in \Lambda}$ be a $p$-basis of $k$ over $\mathbb{F}_p$.
By multiplying each $x_\lambda$ by a suitable non-zero element of $k^p$, we may assume $|x_\lambda| \leq 1$ for all $\lambda$.
Since $[k:k^p] = \infty$, the set $\Lambda$ is infinite.
Choose distinct elements $\{\lambda_n\}_{n \geq 0} \subset \Lambda$ and define
\[
f := \sum_{i \geq 0} x_{\lambda_i} T^i \in k\{r^{-1}T\}.
\]
Arguing as in the characteristic zero case, it suffices to show $f \notin K^p(k(T))$ since $df \neq 0$ in $\Omega_{K/k(T)}$ by \cite{Matsumura}*{Theorem~26.5}.
Assume for contradiction that
\[
f = g_1' f_1'^p + \cdots + g_n' f_n'^p
\]
for some $g_i' \in k(T)$ and $f_i' \in K$.
Then choose $g_0 \in k[T] \setminus \{0\}$ and $f_0 \in k\{r^{-1}T\} \setminus \{0\}$ such that
\[
g_i := g_0 g_i' \in k[T], \qquad f_i := f_0 f_i' \in k\{r^{-1}T\}.
\]
Then
\begin{equation}\label{eq:ch-p}
f_0^p g_0 f = g_1 f_1^p + \cdots + g_n f_n^p.
\end{equation}

Write $f_i = \sum_\nu a_{i,\nu} T^\nu$, $g_i = \sum_\nu b_{i,\nu} T^\nu$, and expand each $b_{i,\nu} \in k$ as
\[
b_{i,\nu} = \sum_{\mu \in M} b_{i,\nu,\mu}^p x^\mu,
\]
where:
\begin{itemize}
  \item $M := \{\mu \colon \Lambda \to \{0,\ldots,p-1\} \mid \mu(\lambda) = 0 \text{ for all but finitely many } \lambda\}$,
  \item $x^\mu := \prod_{\lambda \in \Lambda} x_\lambda^{\mu(\lambda)}$,
  \item each $b_{i,\nu,\mu} \in k$ is zero for all but finitely many $\mu$.
\end{itemize}
Define
\[
\Lambda' := \left\{ \lambda \in \Lambda \,\middle|\, \exists\, i, \nu, \mu \text{ with } \mu(\lambda) \neq 0 \text{ and } b_{i,\nu,\mu} \neq 0 \right\}.
\]
Then $\Lambda'$ is finite, so there exists $j \geq 0$ such that $\lambda_j \notin \Lambda'$.
Choose $\nu_0 \geq 0$ such that
\[
\sum_{\nu_0 = p\nu_1 + \nu_2} a_{0,\nu_1}^p b_{0,\nu_2} \neq 0,
\]
and pick $\mu_0 \in M$ such that
\[
\sum_{\nu_0 = p\nu_1 + \nu_2} a_{0,\nu_1}^p b_{0,\nu_2,\mu_0}^p \neq 0.
\]
The coefficient of $T^{\nu_0 + j}$ in $f_0^p g_0 f$ is
\[
\alpha:=\sum_{\nu_0 + j = p\nu_1 + \nu_2 + \nu_3} a_{0,\nu_1}^p b_{0,\nu_2} x_{\lambda_{\nu_3}},
\]
and the coefficient of $x^{\mu_0}x_{\lambda_j}$ in the basis expansion of $\alpha$ with respect to $\{x^\mu \mid \mu \in M\}$ is 
\[
\sum_{\nu_0=\nu_1+\nu_2} a_{0,\nu_1}^p b_{0,\nu_2,\mu_0}^p \neq 0.
\]
by $\lambda_j \notin \Lambda'$.
On the other hand, the coefficient of $T^{\nu_0 + j}$ in the right-hand side of \eqref{eq:ch-p} lies in the $k^p$-span of $\{x^\mu \mid \mu \in M'\}$, where 
\[
M' := \{\mu \colon \Lambda' \to \{0,\ldots,p-1\} \mid \mu(\lambda) = 0 \text{ for all but finitely many } \lambda\},
\]
and its $x^{\mu_0} x_{\lambda_j}$-coefficient must vanish since $\lambda_j \notin \Lambda'$.
This contradicts \eqref{eq:ch-p}, so $f \notin K^p(k(T))$, completing the proof.
\end{proof}

\begin{lemma}\label{F-finite-case}
Let $k$ be a complete non-archimedean valued field of characteristic $p > 0$.
Let $r = (r_1,\ldots,r_n)$ be a tuple of positive real numbers.
If $[k : k^p] < \infty$, then $\Omega_{k\{r^{-1}T\}/k[T]} = 0$.
\end{lemma}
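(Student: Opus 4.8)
The plan is to reduce the claim to a purely formal statement, namely that $k\{r^{-1}T\}$ is generated as a ring by the polynomial subring $k[T]$ together with all $p$-th powers $g^p$, $g \in k\{r^{-1}T\}$. Write $C$ for this subring. Granting $C = k\{r^{-1}T\}$, the vanishing of $\Omega_{k\{r^{-1}T\}/k[T]}$ follows from a standard observation: for the tower $k[T] \hookrightarrow C \hookrightarrow k\{r^{-1}T\}$ the canonical surjection $\Omega_{k\{r^{-1}T\}/k[T]} \twoheadrightarrow \Omega_{k\{r^{-1}T\}/C}$ is an isomorphism, because the universal $k[T]$-derivation $d$ annihilates every $p$-th power (indeed $d(g^p) = pg^{p-1}\,dg = 0$ in characteristic $p$), the set of elements annihilated by $d$ is a subring containing $k[T]$ and all $p$-th powers, hence it contains $C$, and so $d$ is already a $C$-derivation; thus $\Omega_{k\{r^{-1}T\}/k[T]} \cong \Omega_{k\{r^{-1}T\}/C} = \Omega_{k\{r^{-1}T\}/k\{r^{-1}T\}} = 0$.

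To exhibit the generators I would fix a $p$-basis $x_1, \dots, x_d$ of $k$ over $k^p$ --- finite because $[k:k^p] = p^d < \infty$ --- normalized so that $|x_\lambda| \le 1$, so that the monomials $x^{\mathbf e} := x_1^{e_1} \cdots x_d^{e_d}$, $\mathbf e \in \{0, \dots, p-1\}^d$, form a $k^p$-basis of $k$. Given $f = \sum_\nu a_\nu T^\nu \in k\{r^{-1}T\}$, I would write each exponent $\nu \in \Z_{\ge 0}^n$ uniquely as $\nu = p\mu + \rho$ with $\rho \in \{0, \dots, p-1\}^n$, expand each coefficient $a_{p\mu + \rho} = \sum_{\mathbf e} c_{p\mu + \rho, \mathbf e}^{\,p}\, x^{\mathbf e}$ with $c_{p\mu + \rho, \mathbf e} \in k$, and rearrange, using additivity of Frobenius, to obtain
\[
f \;=\; \sum_{\rho}\; \sum_{\mathbf e}\; x^{\mathbf e}\, T^\rho\, \Bigl( \textstyle\sum_{\mu} c_{p\mu + \rho, \mathbf e}\, T^\mu \Bigr)^{\!p}.
\]
This is a finite sum of products of elements of $k[T]$ with $p$-th powers of elements of $k\{r^{-1}T\}$ --- provided each inner series $g_{\rho, \mathbf e} := \sum_{\mu} c_{p\mu + \rho, \mathbf e}\, T^\mu$ actually lies in $k\{r^{-1}T\}$, i.e. $|c_{p\mu + \rho, \mathbf e}|\, r^\mu \to 0$ as $|\mu| \to \infty$; that, together with the legitimacy of the rearrangement of convergent series, would finish the proof that $C = k\{r^{-1}T\}$.

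The hard part is exactly the convergence of $g_{\rho,\mathbf e}$, and this is where $F$-finiteness is used essentially. The two auxiliary facts I would establish first are that $k^p$ is a complete valued field (it is a closed subfield of $k$, since $|a^p - b^p| = |a - b|^p$ in characteristic $p$) and that $k$ is a finite-dimensional $k^p$-vector space. Given these, the equivalence of all norms on a finite-dimensional vector space over a complete non-archimedean field (\cite{BGR}) shows that the $k^p$-linear coordinate isomorphism $k \xrightarrow{\ \sim\ } (k^p)^{\oplus p^d}$, $a \mapsto (c_{\mathbf e}^{\,p})_{\mathbf e}$, has bounded inverse, so there is a constant $C_0 > 0$ with $|c_{\mathbf e}| \le (C_0 |a|)^{1/p}$ for every $a = \sum_{\mathbf e} c_{\mathbf e}^{\,p}\, x^{\mathbf e} \in k$. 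Applying this with $a = a_{p\mu + \rho}$ and using $r^{p\mu + \rho} = (r^\mu)^p r^\rho$ gives $|c_{p\mu + \rho, \mathbf e}|\, r^\mu \le C_0^{1/p}\, r^{-\rho/p} \bigl( |a_{p\mu + \rho}|\, r^{p\mu + \rho} \bigr)^{1/p}$, which tends to $0$ since $f \in k\{r^{-1}T\}$; hence $g_{\rho, \mathbf e} \in k\{r^{-1}T\}$. I expect this estimate --- bounding the $p$-basis coordinates of an element of $k$ in terms of its absolute value --- to be the only genuinely non-formal point; the Frobenius rearrangement and the reduction of the differential computation are then routine.
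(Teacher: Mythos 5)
Your proposal is correct and follows essentially the same route as the paper: decompose the exponents of $f$ modulo $p$, expand each coefficient in a finite $p$-basis of $k$ over $k^p$, and use the completeness of $k^p$ together with the equivalence of norms on the finite-dimensional $k^p$-vector space $k$ to get the key estimate $|c_{p\mu+\rho,\mathbf e}|\,r^\mu \le C_0^{1/p} r^{-\rho/p}\bigl(|a_{p\mu+\rho}|\,r^{p\mu+\rho}\bigr)^{1/p} \to 0$, which is exactly the paper's inequality. The only cosmetic difference is in the final formal step (you argue via the subring generated by $k[T]$ and $p$-th powers, the paper argues that $df$ lies in the image of $\Omega_{k[T]/k}\otimes_{k[T]}k\{r^{-1}T\}$), and these amount to the same observation that $d$ kills $p$-th powers.
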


\begin{proof}
Let $x_1,\ldots,x_s$ be a basis of $k$ over $k^p$.
We equip $k^p$ with the subspace norm induced from $k$. Then $k^p$ is complete, and the map
\[
\bigoplus^s k^p \longrightarrow k,\quad (a_1,\ldots,a_s) \mapsto \sum_{i=1}^s a_i^p x_i
\]
is a bounded isomorphism of normed $k^p$-vector spaces \cite{Schneider2009}*{Proposition~1.1.6}.
In particular, there exists a constant $C > 0$ such that for any $a = \sum_{i=1}^s a_i^p x_i \in k$ with $a_i \in k$, we have
\begin{equation}\label{eq:F-fin}
    C^{-1} |a| \leq \max\{|a_1^p|, \ldots, |a_s^p|\} \leq C |a|.
\end{equation}
Now let $f = \sum_{\nu \in \mathbb{Z}_{\geq 0}^n} a_\nu T^\nu \in k\{r^{-1}T\}$. For each $e \in \{0, \ldots, p-1\}^n$, define:
\begin{itemize}
    \item $f_e := \sum_{\nu' \in \mathbb{Z}_{\geq 0}^n} a_{p\nu' + e} T^{p\nu' + e}$,
    \item express each coefficient $a_{p\nu'+e} \in k$ as
    \[
    a_{p\nu'+e} = \sum_{i=1}^s a_{p\nu'+e,i}^p x_i, \quad \text{with } a_{p\nu'+e,i} \in k,
    \]
    using the $k^p$-basis $\{x_1, \ldots, x_s\}$,
    \item set $f_{e,i} := \sum_{\nu' \in \mathbb{Z}_{\geq 0}^n} a_{p\nu'+e,i} T^{\nu'} \in k[[T]]$.
\end{itemize}
We claim that each $f_{e,i}$ lies in $k\{r^{-1}T\}$. Indeed, by inequality \eqref{eq:F-fin}, we have
\[
|a_{p\nu'+e,i}| \cdot r^{\nu'} 
\leq C^{1/p} |a_{p\nu'+e}|^{1/p} r^{\nu'} 
= C^{1/p} \left( |a_{p\nu'+e}| \cdot r^{p\nu'+e} \right)^{1/p} \cdot r^{-e/p} \longrightarrow 0 \quad (\nu' \to \infty),
\]
since $f \in k\{r^{-1}T\}$.
Hence, we have
\[
f = \sum_{i=1}^s \sum_{e \in \{0,\ldots,p-1\}^n} f_{e,i}^p \cdot x_i T^e.
\]
We now compute the differential:
\[
df = \sum_{i=1}^s \sum_{e \in \{0,\ldots,p-1\}^n} f_{e,i}^p \cdot d(x_i T^e),
\]
which shows that $df$ lies in the image of the natural map
\[
\Omega_{k[T]/k} \otimes_{k[T]} k\{r^{-1}T\} \longrightarrow \Omega_{k\{r^{-1}T\}/k}.
\]
Therefore, the cokernel $\Omega_{k\{r^{-1}T\}/k[T]}$ vanishes, completing the proof.
\end{proof}

\begin{proof}[Proof of \cref{main-thm}]
The first assertion follows from \cref{reduced-case}.

We prove the implication (1) $\Rightarrow$ (2) by contraposition.
Assume that $|k^\times|^{\mathbb{Q}} \neq \mathbb{R}_{>0}$ and choose $r \in (0,1) \setminus |k^\times|^{\mathbb{Q}}$.
Suppose that either $p = 0$ or $[k : k^p] = \infty$.
Then by \cref{computation-derivation}, we have
\[
\Hom_{k\{r^{-1}T\}}\left( \Omega_{k\{r^{-1}T\}/k[T]}, K_r \right) \neq 0.
\]
By \cref{deformation-prob}, there exists a $k$-algebra homomorphism
\[
\varphi' \colon k\{r^{-1}T\} \to A'
\]
to a $k$-affinoid algebra $A'$ such that $(A')^{\mathrm{red}} \simeq K_r$, and such that $\varphi'$ is not bounded. This contradicts condition (1). Therefore, (1) implies (2).

We now prove the implication (2) $\Rightarrow$ (1).
If $|k^\times|^{\mathbb{Q}} = \mathbb{R}_{>0}$, then the claim follows from the strict case \cite{Berkovich_note}*{Proposition~2.4.4}.
Now assume $p > 0$ and $[k : k^p] < \infty$.
Let $\varphi \colon B \to A'$ be a $k$-algebra homomorphism between $k$-affinoid algebras.
By the proof of \cref{reduced-case}, we may assume without loss of generality that $B = k\{r^{-1}T\}$ for some $r = (r_1,\ldots,r_n)$ of positive real numbers.
Let $A := (A')^{\mathrm{red}}$ endowed with the quotient seminorm.
Then $A$ is a reduced $k$-affinoid algebra, and the composition
\[
B=k\{r^{-1}T\} \xrightarrow{\varphi} A' \twoheadrightarrow A
\]
is bounded by \cref{reduced-case}.
By \cref{F-finite-case}, we have $\Omega_{k\{r^{-1}T\}/k[T]} = 0$, and thus the map $\varphi$ is bounded by \cref{deformation-prob}.
This completes the proof.
\end{proof}

\bibliographystyle{skalpha}
\bibliography{bibliography.bib}

\end{document}